\documentclass[11pt]{article}

\usepackage{amsmath, amsthm, amssymb, enumitem}

\newcommand{\ST}{Szemer\'{e}di-Trotter} 

\newcommand{\F}{\ensuremath{\mathbb{F}}}

\newcommand{\R}{\ensuremath{\mathbb{R}}}
\newcommand{\Z}{\ensuremath{\mathbb{Z}}}

\theoremstyle{plain}
\newtheorem{thm}{Theorem}
\newtheorem{lem}[thm]{Lemma}

\newtheorem{cor}[thm]{Corollary}

\theoremstyle{definition}

\newtheorem*{definition*}{Definition}

\newtheorem*{exercise}{Exercise}
\newtheorem*{acknowledgements}{Acknowledgements}
\theoremstyle{remark}

\newtheorem*{note}{Note}

\newtheorem*{question*}{Question}

\title{A Second Wave of Expanders in Finite Fields}
\author{Brendan Murphy and Giorgis Petridis}
\date{\today}

\begin{document}

\maketitle

\abstract{
This is an expository survey on recent sum-product results in finite fields.

We present a number of sum-product or ``expander'' results that say that if $|A| > p^{2/3}$ then some set determined by sums and product of elements of $A$ is nearly as large as possible, and if $|A|<p^{2/3}$ then the set in question is significantly larger that $A$.
These results are based on a point-plane incidence bound of Rudnev, and are quantitatively stronger than a wave of earlier results following Bourgain, Katz, and Tao's breakthrough sum-product result.

In addition, we present two geometric results: an incidence bound due to Stevens and de Zeeuw, and bound on collinear triples, and an example of an expander that breaks the threshold of $p^{2/3}$ required by the other results.

We have simplified proofs wherever possible, and hope that this survey may serve as a compact guide to recent advances in arithmetic combinatorics over finite fields.
We do not claim originality for any of the results.
}


\section{Introduction}
\label{sec:orgheadline2}
This is an expository survey of recent results related to the \emph{sum-product problem over finite fields}.
Roughly speaking, the sum-product problem is to show that a finite subset of a field cannot have both additive and multiplicative structure (unless it is essentially a subfield).
For instance, if \(p\) is prime and \(A\) is a subset of the field \(\F_p\) with \(p\) elements, then we would expect the set
\[
A+AA := \{a_1+a_2a_3\colon a_1,a_2,a_3\in A\}
\]
to be much larger than \(|A|\), since \(\F_p\) has no non-trivial subfields.

In general, we will consider polynomials \(f\in \Z[x_1,\ldots,x_n]\) and ask if there is a $\delta >0$ such that
\[ |f(A,\ldots,A)| \geq |A|^{1+\delta}\]
for all ``small'' subsets \(A\) of \(\F_p\).
We will call such polynomials \emph{expanding polynomials} or \emph{expanders}.

Explicit examples of expanding polynomials were first given in characteristic zero \cite{erdos1983products-a,elekes1997number}.
The arguments employed here typically use topological properties of the underlying field---for instance, the order of the integers or reals.
Over finite fields, such as $\F_p=\Z/p\Z$, such properties are unavailable, and expansion results are more difficult to prove.
Using Fourier analysis in $\F_p$, Garaev~\cite{garaev2008sum-product} showed that for $A\subseteq\F_p$
\begin{equation}
  \label{eq:29}
  \max(|A+A|,|AA|)\gg \min \left( \sqrt{p|A|},\frac{|A|^2}{p^{1/2}} \right),
\end{equation}
which is optimal for $|A|>p^{2/3}$ and trivial for $|A|<p^{1/2}$.

Bourgain, Katz, and Tao~\cite{bourgain2004sum-product-a} proved the first non-trivial sum-product estimate for ``small'' subsets of finite fields.
They showed that if $A$ is a subset of the prime field $\F_p$ such that $p^\alpha < |A| < p^{1-\alpha}$ for some $\alpha >0$, then there is some $\epsilon > 0$ depending on $\alpha$ such that
\begin{equation}
  \label{eq:20}
  \max(|A+A|,|AA|)\gg |A|^{1+\epsilon}.
\end{equation}
The bounds on $|A|$ rule out the possibility that $|A\cap \F|\gg p^{-\alpha}\max(|A|,|F|)$ for any subfield $\F$ of $\F_p$ (i.e. for $\F=\{0\}, \F=\F_p$); in general, it is true that there is a non-trivial sum-product estimate for $A\subseteq\F_q$ as long as $A$ is not ``roughly equivalent'' to a subfield.
The estimate \eqref{eq:20} still holds when the lower bound on $|A|$ is dropped---this is due to Glibichuk and Konyagin \cite{glibichuk2007additive}.

Garaev \cite{garaev2007explicit} found the first explicit value of $\epsilon$, which was then improved by several authors \cite{katz2008slight,bourgain2009variant,rudnev2012improved}, finally resulting in the lower bound
\[
\max(|A+A|,|AA|)\gg |A|^{1+1/11}(\log |A|)^{-4/11}.
\]
The method behind these early sum-product results for finite fields is called the \emph{pivot method}.
The pivot method is essentially algebraic; it is a flexible method, but it is quantitatively inefficient.

Recently, a new \emph{geometric}\/ method for proving sum-product results in finite fields was discovered.
This geometric method is based on a point-plane incidence bound of Rudnev \cite{rudnev2014number}.
Rudnev's bound has ushered in a new wave of expander results.

For instance, Roche-Newton, Rudnev, and Shkredov  \cite{roche-newton2016sum-product} applied Rudnev's bound to show that if $A$ is a subset of $\F_p$ with $|A|<p^{2/3}$, then
\[
\max(|A+A|,|AA|)\gg |A|^{1+1/5}.
\]
Even more impressive is their lower bound for the mixed sum-product set $A+AA$: for $A\subseteq\F_q$
\begin{equation}
  \label{eq:27}
|A+AA|\gg \min(|A|^{3/2},p)
\end{equation}
where again $p$ is the characteristic of the field $\F_q$.
For $|A|<p^{2/3}$, this bound matches what can be proved directly by the \ST{} incidence bound over $\R$, namely
\begin{equation}
  \label{eq:25}
  |A+AA| \gg |A|^{3/2}
\end{equation}
for all finite subsets $A\subseteq\R$.
The bound \eqref{eq:25} that has only been slightly improved over $\R$ \cite{shkredov2016question}, thus Rudnev's point-plane incidence bound allows us to prove expander results that nearly match those known over the real numbers.

A number of similar results have followed from Rudnev's point-plane bound.
These results are often of the form $|f(A^k)|\gg \min(|A|^{3/2},p)$ for some polynomial $f\in \Z[x_1,\ldots,x_k]$; thus if $|A|>p^{2/3}$, then $|f(A^k)|\gg p$.
We say that these results are at the ``$p^{2/3}$ threshold'':
\begin{enumerate}
\item $|AA+AA|\gg \min(p,|A|^{3/2})$ (Rudnev \cite{rudnev2014number})
\item $|(A-A)(A-A)|\gg p$ if $|A|>p^{2/3}$ (Bennett, Hart, Iosevich, Pakianathan, and Rudnev \cite{bennett2013group}, see also \cite{hart2007sum-product})
\item $|(A-A)^2+(A-A)^2|\gg \min(p,|A|^{3/2})$ (Petridis \cite{petridis2016pinned}, see also \cite{chapman2009pinned})
\item $|A+AA|\gg \min(p, |A|^{3/2})$ (Roche-Newton, Rudnev, and Shkredov \cite{roche-newton2016sum-product})
\item $|A(A+A)|\gg\min(p, |A|^{3/2})$ (Aksoy-Yazici, Murphy, Rudnev, and Shkredov \cite{yazici2015growth})
\end{enumerate}
In the last section of the paper, we present an expander result below the $p^{2/3}$ threshold.
Namely, that if $|A|>p^{5/8}$, then
\begin{equation}
  \label{eq:35}
  |(A-A)(A-A)|\gg p^{5/8}.
\end{equation}
This result is due the second author \cite{petridis2016products}.
As an expander result, this says that the polynomial $f(x,y,z,w)=(x-y)(z-w)$ satisfies $|f(A^4)|\gg p$ whenever $|A|> p^{5/8}$.

In this survey, we take Rudnev's point-plane incidence bound as a black-box and use it to prove a variety of sum-product estimates.
We have tried to present the cleanest possible proofs, and have chosen results that illustrate the how to apply the point-plane incidence bound in a variety of situations.
We do not claim originality for any of the results.

In Section~\ref{sec:geom-appr-sum}, we introduce Rudnev's point plane incidence bound, and use it prove that $|A+AA|\gg \min(p,|A|^{3/2})$.
This method of proof will be a model for many later arguments.
The section ends with a generalization of the method, due to \cite{yazici2015growth}, phrased in terms of certain ``energies'' $E(Q;A)$ or $E(L,A)$, where $A\subseteq\F_p$, $Q\subseteq\F_p^2$, and $L$ is a collection of lines in $\F_p^2$.

This generalized argument will be applied in Section~\ref{sec:orgheadline18} to prove two further expander results, and in Section~\ref{sec:incid-results-points} to prove two geometric results: an incidence bound due to Stevens and de Zeeuw, and a bound on ``collinear triples'' due to Aksoy-Yazici, Murphy, Rudnev, and Shkredov \cite{yazici2015growth}.

The final section of paper contains a proof of the expansion result \eqref{eq:35}, which seems to be the first such result below the $p^{2/3}$ threshold.

\begin{acknowledgements}
  We thank Olly Roche-Newton, Misha Rudnev, and Sophie Stevens for several helpful suggestions.
We would also like to thank Mel Nathanson for inviting us to write this survey for the proceedings CANT 2015/2016.
\end{acknowledgements}

\section{A geometric approach to sum-product problems in finite fields}
\label{sec:geom-appr-sum}
In this section, we present a proof of \eqref{eq:27} based on Rudnev's point plane incidence bound, which will serve as a prototype for further applications.
We then generalize the method of proof; this generalized formulation will be applied to a variety of applications in the remaining sections.

\subsection{Rudnev's point-plane incidence bound}

Rudnev's incidence bound is the following.
\begin{thm}[Rudnev \cite{rudnev2014number}]
  \label{thm:1}
Let $\F$ denote a field, and let $p$ denote the characteristic of $\F$.
Let $P$ be a set of points in $\F^3$ and let $\Pi$ be a set of planes in $\F^3$ with $|P|\leq |\Pi|$.
If $p > 0$, assume that $|P|\ll p^{2}$.
Let $k$ denote the maximum number of points of $P$ contained in a line.
Then
\[
I(P,\Pi)\ll |P|^{1/2}|\Pi|+k|P|.
\]
\end{thm}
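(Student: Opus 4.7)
The plan is to treat Rudnev's bound as a three-dimensional analogue of the \ST{} theorem, with planes playing the role of lines. The essential new difficulty is that two planes generically intersect in a line rather than a point, so a naive polynomial partitioning of $P$ cannot succeed on its own: a degree-$d$ partition of $\F^3$ yields cells of size $O(|P|/d^3)$, but every plane crosses $O(d^2)$ cells, and the resulting cellular bound $O(|P||\Pi|/d)$ cannot be pushed past the exponent forced by the Bezout constraint $d\lesssim |P|^{1/3}$. The rich-line parameter $k$ and the hypothesis $|P|\ll p^2$ are both present to control this obstruction.

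The first step I would take is to reduce the point-plane incidence problem to an incidence question about points and lines in $\F^3$. Since two planes in $\Pi$ meet in a line, one can relate $I(P,\Pi)$ to a bilinear count over pairs of planes and pairs of points, and then pass from the ``flexible'' plane incidences to the more rigid setting of line incidences via the collection of lines obtained from pairwise plane intersections.

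The second step is to invoke an incidence theorem of Guth--Katz type for points and lines in three-space: if no plane or regulus contains too many lines of the configuration, the point-line incidence count is controlled by a square-root term together with a correction for lines of exceptional richness. This is where the polynomial method enters, and where the hypothesis $|P|\ll p^2$ becomes essential: the degree $d\sim |P|^{1/3}$ of the partitioning polynomial must remain well below the characteristic $p$ for Bezout and its consequences to behave correctly.

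The main obstacle, and the heart of Rudnev's argument, is controlling the exceptional contribution from lines that carry many points of $P$ or lie on many planes of $\Pi$. The $k|P|$ term in the conclusion is precisely the bookkeeping for this: any line with close to $k$ points of $P$ must be extracted before the uniform Guth--Katz estimate applies. I expect that after separating such rich lines and running the line-incidence bound on the rest, the stated inequality falls out, with the principal subtlety being the establishment of the three-dimensional line-incidence input over arbitrary fields, where the topological tools available over $\R$ are unavailable.
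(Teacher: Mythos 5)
The first thing to note is that the paper does not prove Theorem~\ref{thm:1} at all: it is used as a black box, with the proof deferred to Rudnev's original paper and to de Zeeuw's short proof \cite{zeeuw2016short}. So your proposal has to be measured against those arguments, and as written it is an outline that gestures at the right circle of ideas (translate the point--plane problem into a statement about configurations of lines in three-space, apply a Guth--Katz-type theorem valid over arbitrary fields, with the hypothesis $|P|\ll p^{2}$ reflecting the need to keep algebraic degrees below the characteristic) but leaves the actual content of the theorem unproven.

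Concretely, there are three gaps. First, the reduction you propose --- relating $I(P,\Pi)$ to a bilinear count and passing to ``the collection of lines obtained from pairwise plane intersections'' --- is not a working reduction and is not what Rudnev or de Zeeuw do: in their arguments \emph{each point of $P$ and each plane of $\Pi$} is mapped to a line (Rudnev via Pl\"ucker coordinates and the Klein quadric, de Zeeuw by a projection from a generically chosen line), so that an incidence $q\in\pi$ becomes an intersecting pair of lines, one from each family; you would need to exhibit such a correspondence and check its non-degeneracy, and nothing of the sort follows from looking at intersections of pairs of planes in $\Pi$. Second, the key input --- a bound on the number of intersecting pairs among $N$ lines in three-space over an \emph{arbitrary} field, with exceptional terms for families of lines lying in a plane or regulus (Guth--Katz over $\R$, Koll\'ar in general) --- is precisely where all the difficulty lives; invoking ``an incidence theorem of Guth--Katz type'' without proof defers the heart of the matter, and the polynomial-partitioning mechanism you sketch is unavailable over finite fields (no ham-sandwich/topological tools), which is why the general-field argument instead proceeds by degree reduction and counting lines on low-degree surfaces, the condition $|P|\ll p^{2}$ guaranteeing the relevant degrees stay below $p$. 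Third, the role of $k$ is mischaracterized: the term $k|P|$ does not come from ``extracting rich lines before the uniform estimate applies,'' but from the fact that $k$ collinear points of $P$ are sent to $k$ lines lying in a common plane (or through a common point), so that $k$ bounds the size of coplanar/concurrent subfamilies and hence the corresponding exceptional term in the line--line intersection bound. Without these three pieces made precise, the proposal is a plausible road map rather than a proof.
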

Theorem~\ref{thm:1} is strongest when $|P|=|\Pi|$.
See \cite{zeeuw2016short} for a short proof of Theorem~\ref{thm:1}, due to de Zeeuw.

For convenience, we combine Theorem~\ref{thm:1} with an incidence bound for large subsets of $\F_P^2$.
\begin{cor}
\label{cor:1}
Let \(p\) be an odd prime, let \(P\) be a collection of points in \(\F_p^3\), and let \(\Pi\) be a collection of planes in \(\F_p^3\).

Suppose that \(|P|=|\Pi|=N\) and that at most \(k\) points of \(P\) are collinear.
Then the number of point-plane incidences satisfies
\[
I(P,\Pi) \ll \frac{N^2}p + N^{3/2} + kN.
\]
\end{cor}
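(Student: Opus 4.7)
My plan is to reduce the corollary directly to Rudnev's Theorem~\ref{thm:1} by dichotomizing on the size of $N$ relative to $p^2$. The only obstacle is that Theorem~\ref{thm:1} requires $|P| \ll p^2$, which may fail in the stated setting; when it holds, one of the two terms of Rudnev's bound already matches the claim, and when it fails, a partitioning argument recovers the additional $N^2/p$ term.

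First I would handle the easy regime $N \leq c p^2$, where $c$ is a small absolute constant chosen so that the hypothesis $|P| \ll p^2$ is automatic. Since $|P| = |\Pi| = N$, Theorem~\ref{thm:1} applies directly and yields
\[
I(P, \Pi) \ll N^{1/2} \cdot N + kN = N^{3/2} + kN,
\]
which is absorbed by the right-hand side of the claimed inequality.

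The more interesting case is $N > c p^2$, in which $P$ is too large for Theorem~\ref{thm:1} to apply as a whole. Here I would partition $P$ arbitrarily into $m := \lceil N/(cp^2) \rceil$ disjoint subsets $P_1, \ldots, P_m$ with $|P_i| \leq c p^2$ for each $i$. Every $P_i$ inherits from $P$ the property that at most $k$ of its points are collinear, and satisfies both $|P_i| \ll p^2$ and $|P_i| \leq c p^2 \leq N = |\Pi|$, so Theorem~\ref{thm:1} applies to $(P_i, \Pi)$ and gives $I(P_i, \Pi) \ll |P_i|^{1/2} N + k|P_i|$. Summing over $i$, and using $|P_i|^{1/2} \leq \sqrt{c}\, p$, $\sum_i |P_i| = N$, and $m \ll N/p^2$, one obtains
\[
I(P, \Pi) = \sum_{i=1}^{m} I(P_i, \Pi) \ll m \cdot p N + k \sum_{i=1}^{m} |P_i| \ll \frac{N^2}{p} + kN,
\]
which again fits inside the desired bound. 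Combining both regimes yields $I(P, \Pi) \ll N^2/p + N^{3/2} + kN$.

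The key (and essentially only) insight is that slicing $P$ into pieces of size $\asymp p^2$ converts the $|P|^{1/2}|\Pi|$ term of Rudnev's bound into precisely the ``random-like'' term $N^2/p$ that one expects for large configurations, since a generic point lies in a $1/p$ fraction of planes. Once this observation is in hand, no serious technical obstacle remains; the rest is a routine packing argument plus bookkeeping of constants.
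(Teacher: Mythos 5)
Your argument is correct, but in the large-$N$ regime it takes a genuinely different route from the paper. The paper handles $N > p^2$ by invoking a separate incidence bound valid for large sets (from the point-plane/Fourier-type estimates in \cite{murphy2016point-line,hart2011averages,vinh2011szemeredi-trotter,lund2014incidence}), namely $I(P,\Pi) \leq \frac{|P||\Pi|}{p} + p\sqrt{|P||\Pi|} = \frac{N^2}{p} + pN$, and observes that the second term is dominated by the first once $N > p^2$; Rudnev's theorem is used only when $N < p^2$. You instead stay entirely within Theorem~\ref{thm:1}: slicing $P$ into $\asymp N/p^2$ pieces of size $\leq cp^2$, noting that each piece inherits the bound $k$ on collinear points and satisfies $|P_i| \leq |\Pi|$, and summing $I(P_i,\Pi) \ll |P_i|^{1/2}N + k|P_i|$ to get $\frac{N^2}{p} + kN$. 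This is sound: incidences are additive over a partition of $P$, the implied constant stays absolute, and your bookkeeping ($m \ll N/p^2$, $|P_i|^{1/2} \leq \sqrt{c}\,p$, $\sum_i |P_i| = N$) is right. What each approach buys: yours is self-contained, needing no auxiliary large-set incidence bound, and it makes transparent why the ``random-like'' term $N^2/p$ appears; it also sidesteps any fussing over the exact constant in the hypothesis $|P| \ll p^2$ of Theorem~\ref{thm:1}, since you work below $cp^2$ by construction. The paper's route is shorter and, because the large-set bound requires no collinearity hypothesis, cleanly separates the two regimes with the term $N^2/p$ arising with an explicit main-term constant; your version instead carries the (harmless) extra $kN$ into the large regime. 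Both yield the stated bound.
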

The advantage of Corollary~\ref{cor:1} over Theorem~\ref{thm:1} is that we do not need to bound the size of the point set and the collection of planes before applying the bound.
\begin{proof}
  By \cite{murphy2016point-line} (see also \cite{hart2011averages,vinh2011szemeredi-trotter,lund2014incidence}), we have
\[
I(P,\Pi) \leq \frac{|P||\Pi|}p + p\sqrt {|P||\Pi|} = \frac{N^2}p + pN.
\]
Thus if $N > p^2$, then
\[
I(P,\Pi) \ll \frac{N^2}p.
\]
On the other hand, if $N<p^2$, then by Theorem~\ref{thm:1} we have
\[
I(P,\Pi) \ll  |P|^{1/2}|\Pi|+k|P| = N^{3/2}+kN.
\]
\end{proof}

\subsection{A lower bound for \(|A+AA|\)}
\label{sec:orgheadline7}

In this section, we prove the following theorem, due to Roche-Newton, Rudnev, and Shkredov \cite{roche-newton2016sum-product}.
\begin{thm}
\label{thm:2}
  For all subsets $A$ of $\F_p$, we have
\[
|A+AA| \gg \min(p, |A|^{3/2}).
\]
\end{thm}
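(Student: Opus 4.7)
My plan is to bound $|A+AA|$ by an energy argument: via Cauchy--Schwarz I reduce to an upper bound on the number of collision sextuples, and then interpret this count as point--plane incidences in $\F_p^3$ to which Corollary~\ref{cor:1} applies directly.

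Step~1 (Cauchy--Schwarz). For each $s \in A+AA$, let $N(s)=\#\{(a_1,a_2,a_3)\in A^3:a_1+a_2a_3=s\}$. Since $\sum_s N(s)=|A|^3$, Cauchy--Schwarz gives
\[
|A|^6 \;\leq\; |A+AA|\cdot T,\qquad T \;:=\;\sum_s N(s)^2,
\]
where $T$ is the number of sextuples in $A^6$ satisfying $a_1+a_2a_3=b_1+b_2b_3$. It therefore suffices to show $T\ll |A|^{9/2}+|A|^6/p$.

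Step~2 (Incidence formulation). Rewrite the collision equation as $a_3 a_2 - b_3 b_2 - b_1 + a_1=0$. To each triple $(a_1,a_3,b_3)\in A^3$ I associate the plane $\pi_{a_1,a_3,b_3}\subset \F_p^3$ cut out by $a_3X-b_3Y-Z+a_1=0$; reading off the normal direction shows that distinct triples give distinct planes, so $|\Pi|=|A|^3$. Taking $P=A\times A\times A$ with points written as $(a_2,b_2,b_1)$, the collision equation is exactly the incidence condition $(a_2,b_2,b_1)\in \pi_{a_1,a_3,b_3}$, so $T=I(P,\Pi)$. The virtue of this split of variables is that $|P|=|\Pi|=|A|^3$ with no multiplicities and no dyadic book-keeping.

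Step~3 (Collinearity and conclusion). I next observe that no line in $\F_p^3$ contains more than $|A|$ points of $A\times A\times A$: if the line projects to a point in the $(X,Y)$-plane then its third coordinate lands in $A$ at most $|A|$ times, while if it projects to an honest line it meets $A\times A$ in at most $|A|$ points, and the third coordinate is then determined. Applying Corollary~\ref{cor:1} with $N=|A|^3$ and $k\leq|A|$ yields
\[
T \;\ll\; \frac{|A|^6}{p}+|A|^{9/2}+|A|^4,
\]
and the last term is absorbed by the second. Substituting into Step~1 gives $|A+AA|\gg \min(p,|A|^{3/2})$, as required. The main obstacle I anticipate is Step~2: the symmetric split of the six variables into $(a_1,a_3,b_3)$ for planes and $(a_2,b_2,b_1)$ for points is what makes $|P|=|\Pi|$ without multiplicities; a naive split (for instance encoding $a_1-b_1$ as a weighted third coordinate) would force a weighted incidence bound and a dyadic decomposition that are much less clean.
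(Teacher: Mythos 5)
Your proposal is correct and follows essentially the same route as the paper's proof: Cauchy--Schwarz reduces the problem to the collision count $T$, which is realized as point--plane incidences with $|P|=|\Pi|=|A|^3$, the collinearity parameter is bounded by $k\leq|A|$, and Corollary~\ref{cor:1} finishes the argument. The only (immaterial) difference is which three of the six variables index the planes versus the points.
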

The proof of Theorem~\ref{thm:2} will serve as a model for the rest of the results in this section.

\begin{proof}
First, we apply Cauchy-Schwarz.
Let
\[
 r_{A+AA}(x)=|\{(a,b,c)\in A^3\colon a+bc =x\}|.
 \]
The support of \(r_{A+AA}\) is \(|A+AA|\) and
\[
 \sum_x r_{A+AA}(x) = |A|^3,
 \]
thus by Cauchy-Schwarz
\[ |A|^6 = \left( \sum_x r_{A+AA}(x) \right)^2 \leq |A+AA|\sum_{x}r^2_{A+AA}(x).\]
To show that
\[ |A+AA| \gg \min(p, |A|^{3/2})\]
it suffices to show that
\[
 \sum_{x}r^2_{A+AA}(x)\ll \max\left( \frac{|A|^6}p, |A|^{9/2}\right).
 \]

Next, we reduce the problem to a point-plane incidence problem.
The second moment of \(r_{A+AA}(x)\) counts the number of solutions to the equation
\begin{equation}
  \label{eq:1}
  a+bc=a'+b'c'
\end{equation}
with \(a,b,c,a',b',c'\) in \(A\).

To bound the number of solutions to this equation, we will realize the each solution as an incidence between a certain point and a certain plane.
Let \(\pi_{a,b,c'}\) denote the set of points \((x,y,z)\) such that
\[
a = x -by + c'z.
\]
The point \((x,y,z)=(a',c,b')\) is incident to the plane \(\pi_{a,b,c'}\) precisely when~\eqref{eq:1} is satisfied: if
\[
a = a' - bc + c'b',
\]
then
\[
a+bc = a'+b'c'.
\]

Finally, we apply Rudnev's point-plane incidence bound, in the form of Corollary~\ref{cor:1}.
Let \(P=\{(a',c,b')\in A^3\}\) and let \(\Pi=\{\pi_{a,b,c'}\colon (a,b,c')\in A^3\).
Then \(|P|=|\Pi|=|A|^3\).
Thus by Corollary~\ref{cor:1}, we have
\[
I(P,\Pi)\ll \frac{|A|^6}p + |A|^{9/2} + k |A|^3.
\]
This yields the desired upper bound on the second moment of \(r_{A+AA}(x)\), provided that the number \(k\) of collinear points of \(P=A\times A\times A\) is not too large.

It is not hard to show that \(k\leq |A|\): if \(\ell\) is parallel to the $x$-axis, then \(|P\cap \ell|\leq |A|\), while if \(\ell\) is not parallel to the $x$-axis, then \(\ell\) may be parameterized in terms of \(y\) or $z$, which again implies that \(|P\cap\ell|\leq |A|\).

Since \(k|A|^3 \leq |A|^4 \leq |A|^{9/2}\), we have
\begin{align*}
  \sum_x r_{A+AA}^2(x) = I(P,\Pi) &\ll \frac{|A|^6}p + |A|^{9/2} + k |A|^3\\
& \ll \max\left(\frac{|A|^6}p, |A|^{9/2}\right),
\end{align*}
as desired.
\end{proof}

\subsection{Generalizing the method}
\label{sec:orgheadline9}

In this section, we will generalize the method used to count solutions to \eqref{eq:1}.
This generalization first appeared in \cite{yazici2015growth}; below we present simplification of the original argument. 

In order to form the set of points and planes associated to the equation~\eqref{eq:1}
\[
a+bc=a'+b'c' 
\]
it was essential that \((a,c)\) was independent from \(b\) and \((a',c')\) was independent from \(b'\).
While we also knew that \(a\) and \(c\) were independent, we do not make use of this in forming the points and planes.

Given a set of pairs $Q\subseteq\F_p^2$ and a set $A\subseteq\F_p$, let $E(Q;A)$ denote the number of solutions to
\begin{equation}
  \label{eq:2}
  ma+b = m'a'+b'
\end{equation}
with \((m,b),(m',b')\) in \(Q\) and \(a,a'\) in \(A\).
\begin{thm}
  \label{thm:3}
\[
E(Q;A) \ll \frac{|Q|^2|A|^2}p + (|Q||A|)^{3/2} + k|Q||A|,
\]
where
\[
k\leq \max\left(|A|, \max_{\ell\text{ line in }\F^2}|Q\cap \ell|\right).
\]
\end{thm}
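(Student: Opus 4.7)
The plan is to realize the equation \eqref{eq:2} as a point-plane incidence problem, following the template of the proof of Theorem~\ref{thm:2}, and then apply Corollary~\ref{cor:1}.

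First, I rewrite the equation $ma + b = m'a' + b'$ so that one side depends on three ``point'' variables and the other on three ``plane'' parameters. A natural split is to take $(a, m', b')$ as the point and $(m, b, a')$ as the plane data: the equation is linear in each group of three. Specifically, for each triple $(m, b, a')$ with $(m,b) \in Q$ and $a' \in A$, define the plane
\[
\pi_{m,b,a'} = \{(x,y,z) \in \F_p^3 : mx - a' y - z + b = 0\},
\]
and let $P = \{(a, m', b') : a \in A,\ (m',b') \in Q\}$, $\Pi = \{\pi_{m,b,a'} : (m,b) \in Q,\ a' \in A\}$. Then $|P| = |\Pi| = |Q||A| =: N$, and $(a,m',b')$ lies on $\pi_{m,b,a'}$ exactly when \eqref{eq:2} holds. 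Hence $E(Q;A) = I(P,\Pi)$.

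Next, I apply Corollary~\ref{cor:1} to obtain
\[
E(Q;A) = I(P, \Pi) \ll \frac{N^2}{p} + N^{3/2} + kN = \frac{|Q|^2|A|^2}{p} + (|Q||A|)^{3/2} + k|Q||A|,
\]
where $k$ is the maximum number of collinear points of $P$. It remains to bound $k$. Let $\ell$ be a line in $\F_p^3$. If $\ell$ is parallel to the $x$-axis, then every point of $\ell \cap P$ has the same $(m', b')$-coordinate, so $|\ell \cap P| \leq |A|$. Otherwise, projection to the $yz$-plane sends $\ell$ to a line $\ell'$ in $\F_p^2$ and restricts to an injection on $\ell \cap P$ (each $(m', b')$ determines $a$ via $\ell$), so $|\ell \cap P| \leq |Q \cap \ell'|$. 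Taking the worst case gives $k \leq \max\bigl(|A|,\ \max_{\ell} |Q \cap \ell|\bigr)$, as claimed.

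The step I expect to require the most care is the plane construction: one has to arrange the six symbols $m, b, m', b', a, a'$ so that three of them appear only on one side and three only on the other, \emph{and} so that the resulting equation is genuinely linear in the three ``point'' variables (not, for instance, involving a product $m' b'$). The choice above works because the only products in \eqref{eq:2} are $ma$ and $m'a'$, which separate cleanly. Beyond that, the collinear-points analysis is essentially the same case split as in Theorem~\ref{thm:2}, but now the non-$x$-axis case must be reduced to collinearity in $Q$ rather than in $A \times A$, which is what forces the $\max_{\ell}|Q \cap \ell|$ term into the bound on $k$.
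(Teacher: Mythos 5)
Your proposal is correct and follows essentially the same route as the paper: the planes $mx + b = a'y + z$ and the point set $A \times Q$ are exactly the paper's construction, followed by the same application of Corollary~\ref{cor:1}. The only cosmetic difference is the case split for bounding $k$ (you split on whether $\ell$ is parallel to the $x$-axis, the paper on whether the $x$-coordinate of $\ell$ is constant), and both yield $k \leq \max\bigl(|A|, \max_{\ell}|Q\cap\ell|\bigr)$.
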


\begin{proof}
For each $(m,b)$ in $Q$ and $a$ in $A$, form a plane
\[
\pi_{(m,b),a'} = \{(x,y,z)\in \F_q^3\colon mx+b = ya'+z\}.
\]
The equation~\eqref{eq:2} holds if and only if \((a,m',b')\in\pi_{(m,b),a'}\).

If we let \(P= A\times Q\) and let \(\Pi\) denote the set of all planes \(\pi_{(m,b),a'}\) with \((m,b)\) in \(Q\) and \(a'\) in \(A\).
Then \(|P|=|\Pi|\), so we have
\[
I(P,\Pi) \ll  \frac{|P|^2}p + |P|^{3/2} + k |P|.
\]

To bound $k$, we argue as before: if the $x$-coordinate of $\ell$ is not constant, then $|P\cap\ell|\leq |A|$, since we may parameterize $\ell$ in terms of $x$, and $P=A\times Q$.
If the $x$-coordinate of $\ell$ is constant (say equal to $a_0$), then
\[
|P\cap\ell| \leq |\{a_0\}\times Q\cap\ell|\leq \max_{\ell\text{ line in }\F^2}|Q\cap\ell|.
\]
\end{proof}

\subsection{A bound for the energy of affine transformations acting on the line}
\label{sec:orgheadline8}
In \cite{yazici2015growth}, the points in \(Q\) were associated to lines by duality.
There is a natural interpretation of this dual quantity, however the proof is more convoluted.
Now that we have the bound for \eqref{eq:2} in hand, we can give the dual version quite easily.

To each point \((m,b)\) in \(Q\), we associate an affine transformation \(\ell_{m,b}\) defined by \(\ell_{m,b}(x)=mx+b\).
We let \(L_Q\) denote the set of all \(\ell_{m,b}\) with \((m,b)\) in \(Q\).
With this notation, equation \eqref{eq:2} counts the number of solutions to
\begin{equation}
  \label{eq:3}
  \ell(a)=\ell'(a')
\end{equation}
with \(\ell,\ell'\) in \(L_Q\) and \(a,a'\) in \(A\).
We use \(E(L,A)\) to denote the number of solutions to \eqref{eq:3}.

\begin{cor}
\label{cor:2}
Let $L$ be a set of lines in $\F_p^2$ and let $A$ be a subset of  $\F_p$.
Let $\kappa$ denote the size of the largest pencil of lines in $L$; that is, $\kappa$ is maximum size of a subset $L'\subseteq L$ such that all of the lines of $L'$ are parallel or pass through a common point.

Then 
\[
E(L,A)\ll \frac{|L|^2|A|^2}p + (|L||A|)^{3/2} + k|L||A|,
\]
where $k\leq \max(|A|,\kappa)$.
\end{cor}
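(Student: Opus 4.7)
The plan is to reduce the corollary to Theorem~\ref{thm:3} via the natural point-line duality between affine transformations and their parameters. Specifically, I would define $Q = \{(m,b)\in\F_p^2 : \ell_{m,b}\in L\}$, so that $|Q|=|L|$, and observe that the equation $\ell(a)=\ell'(a')$ with $\ell,\ell'\in L$ is literally the same as the equation $ma+b=m'a'+b'$ with $(m,b),(m',b')\in Q$. Thus $E(L,A)=E(Q;A)$, and Theorem~\ref{thm:3} gives
\[
E(L,A)\ll \frac{|Q|^2|A|^2}{p}+(|Q||A|)^{3/2}+k|Q||A|,
\]
with $k\leq \max(|A|,\max_{\ell'\text{ line in }\F^2}|Q\cap\ell'|)$.

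The only real content is to identify the quantity $\max_{\ell'\text{ line in }\F^2}|Q\cap\ell'|$ with the pencil parameter $\kappa$. I would do this by a short case analysis on the form of a line $\ell'$ in the $(m,b)$-parameter plane. If $\ell'$ is the vertical line $\{m=m_0\}$, then the corresponding subset of $L$ consists of all lines $y=m_0x+b$ with $(m_0,b)\in Q$; these lines share a common slope $m_0$ and so form a parallel pencil. If instead $\ell'$ has the form $\{b=\alpha m+\beta\}$, then the corresponding lines in $L$ are $y=mx+\alpha m+\beta$, each of which passes through the point $(-\alpha,\beta)$, giving a concurrent pencil. Conversely, each parallel or concurrent pencil in $L$ pulls back to a collinear set of parameter points in $Q$, so $\max_{\ell'}|Q\cap\ell'|=\kappa$ exactly.

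Substituting this identification into the bound from Theorem~\ref{thm:3} and using $|Q|=|L|$ yields the stated inequality. The main (and only) obstacle is getting the duality bookkeeping right so that ``pencil in $L$'' and ``collinear points of $Q$'' match up in both the parallel and concurrent cases; once that is in place the corollary is immediate.
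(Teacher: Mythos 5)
Your proposal is correct and follows essentially the same route as the paper: pass to the parameter set $Q$ with $L=L_Q$, note $E(L,A)=E(Q;A)$, apply Theorem~\ref{thm:3}, and identify the maximum number of collinear points of $Q$ with the largest pencil in $L$ (your explicit case analysis of vertical versus non-vertical lines in the $(m,b)$-plane is just a spelled-out version of the duality the paper invokes in one line).
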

\begin{proof}
Let $Q$ be such that $L=L_Q$.
Then
\[
E(L,A) = E(Q;A)
\]
and $k$ is the maximum of $|A|$ and the maximum number of points of $Q$ lying on a line, which is precisely maximum number of lines in a pencil.
\end{proof}

The quantity $E(L,A)$, which is the number of solutions to
\[
\ell(a)=\ell'(a')\qquad \ell,\ell'\in L, a,a'\in A
\]
is analogous to the \emph{multiplicative energy} $E^\times(B,A)$ of a set \(B\) and a set \(A\), which is the number of solutions to
\[
ba=b'a'\qquad b,b'\in B, a,a'\in A.
\]

\section{Expansion results at the \(p^{2/3}\) threshold}
\label{sec:orgheadline18}

\subsection{A lower bound for \(|A(A+A)|\)}

\begin{thm}
  \label{thm:4}
For any subset $A$ of $\F_p$, we have
\[
|A(A+A)|\gg \min(p, |A|^{3/2}).
\]
\end{thm}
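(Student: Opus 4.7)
The plan is to mimic the proof of Theorem~\ref{thm:2}, combining Cauchy-Schwarz with the energy estimate of Corollary~\ref{cor:2}. Define the representation function $r(x) = |\{(a,b,c) \in A^3 : a(b+c)=x\}|$. Since $r$ is supported on $A(A+A)$ and $\sum_x r(x) = |A|^3$, Cauchy-Schwarz gives
\[|A|^6 \leq |A(A+A)| \sum_x r(x)^2,\]
so it suffices to prove $\sum_x r(x)^2 \ll |A|^6/p + |A|^{9/2}$, after which the desired bound $|A(A+A)| \gg \min(p, |A|^{3/2})$ follows by comparing the two terms.

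Next, I would reinterpret $\sum_x r(x)^2$ as an energy $E(L,A)$ for a carefully chosen family of lines. The sum counts sextuples in $A^6$ satisfying $a(b+c)=a'(b'+c')$; setting $\ell_{a,b}(x) := a(x+b) = ax + ab$, the equation becomes $\ell_{a,b}(c) = \ell_{a',b'}(c')$. Taking $L = \{\ell_{a,b} : (a,b)\in A^2\}$ (and replacing $A$ by $A \setminus \{0\}$ first, at the cost of a constant factor, so that the map $(a,b)\mapsto\ell_{a,b}$ is a bijection and $|L|=|A|^2$), we obtain $\sum_x r(x)^2 = E(L,A)$.

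To apply Corollary~\ref{cor:2}, I must verify that the largest pencil $\kappa$ in $L$ has size at most $|A|$. Two lines $\ell_{a,b}$ and $\ell_{a',b'}$ are parallel iff they share the slope $a=a'$, so a pencil of parallel lines in $L$ has size at most $|A|$. For a pencil through a point $(x_0,y_0)$, the condition $y_0 = a(x_0+b)$ determines $a$ uniquely from each admissible $b$ whenever $y_0 \neq 0$; when $y_0=0$, since $a\neq 0$, we are forced to take $b=-x_0$ while $a$ ranges freely in $A$, again giving at most $|A|$ lines. Thus $\kappa \leq |A|$, so $k\leq |A|$, and Corollary~\ref{cor:2} yields
\[E(L,A) \ll \frac{|A|^6}{p} + |A|^{9/2} + |A|^4,\]
which simplifies to the desired bound since $|A|^4 \leq |A|^{9/2}$. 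The main technical task is selecting the right family of lines so that the equation $a(b+c)=a'(b'+c')$ takes the form $\ell(c)=\ell'(c')$; once this parameterization is fixed, the pencil bound is a short case check and the rest of the argument is in the same mold as Theorem~\ref{thm:2}.
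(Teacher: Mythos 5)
Your proposal is correct and is essentially the paper's argument: the paper applies Theorem~\ref{thm:3} with the point set $Q=\{(a,ac)\colon a,c\in A\}$ (after removing $0$ from $A$), which under the duality of Corollary~\ref{cor:2} is exactly your family of lines $\ell_{a,b}(x)=ax+ab$, and your pencil bound $\kappa\leq|A|$ matches the paper's check that at most $|A|$ points of $Q$ lie on a line. The Cauchy-Schwarz reduction and the final estimate $\ll |A|^6/p+|A|^{9/2}+|A|^4$ are identical to the paper's.
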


\begin{proof}
Without loss of generality, suppose that $A$ does not contain $0$.

  By Cauchy-Schwarz, we have
  \begin{equation}
    \label{eq:8}
    |A|^6\leq |A(A+A)|\,|\{(a,b,c,a',b',c')\in A^6\colon a(b+c)=a'(b'+c')\}|.
  \end{equation}
We wish to bound the number of solutions to
\begin{equation}
  \label{eq:7}
  a(b+c)=a'(b'+c')
\end{equation}
with $a,\ldots,c'$ in $A$.

Since we can write $a(b+c) = ab+ac$, if we let $Q=\{(a,ac)\colon a,c\in A\}$, then the number of solutions to \eqref{eq:7} is $E(Q;A)$.
The map $(a,c)\mapsto (a,ac)$ is injective, as long as $a\not=0$, so $|Q|=|A|^2$.
At most $|A|$ elements of $Q$ lie on a single line, so by Theorem~\ref{thm:3}, the number of solutions to \eqref{eq:7} is 
\[
|\{(a,b,c,a',b',c')\in A^6\colon a(b+c)=a'(b'+c')\}|\ll \frac{|A|^6}p + |A|^{9/2}.
\]
Combining this bound with \eqref{eq:8} yields the desired lower bound on $|A(A+A)|$.
\end{proof}

\begin{note}
The set of points $Q=\{(a,ac)\colon a,c\in A\}$ is projectively equivalent to $A\times A$, which immediately implies that $|Q\cap \ell|\leq |A|$ for any line $\ell$.
In general, if $Q$ is projectively equivalent to $B\times C$, then we have $k\leq \max(|A|,|B|,|C|)$.
\end{note}

The following example, suggested by Roche-Newton, can be proved by a similar argument.
\begin{exercise}
Let
\[
A(AA+1)=\{a(bc+1)\colon a,b,c\in A\}.
\]
Show that
\[
|A(AA+1)|\gg\min(p,|A|^{3/2}).
\]
\end{exercise}

\subsection{A lower bound for \(|(A-A)^2+(A-A)^2|\)}

In this section, we show that there is a point $(u,v)$ in $A\times A$ such that
\begin{equation}
  \label{eq:6}
  |(A-u)^2 + (A-v)^2|\gg \min(p,|A|^{3/2}).
\end{equation}
This result is due to the second author~\cite{petridis2016pinned}.

Geometrically, equation~\eqref{eq:6} says that the product set $P=A\times A$ determines $\gg\min(p, |P|^{3/4})$ distances to the point $(u,v)\in P$.

\begin{proof}
To prove a lower bound for $|(A-u)^2+(A-v)^2|$, we will bound the number of solutions to
  \begin{equation}
\label{eq:4}
    (a-u)^2+(b-v)^2 =(c-u)^2+(d-v)^2\qquad a,b,c,d,u,v\in A.
  \end{equation}
Then we will pigeonhole over $u$ and $v$, and apply a Cauchy-Schwarz energy type argument.

To bound the number of solutions to \eqref{eq:4}, we rearrange the equation
\[
  (a-u)^2-(c-u)^2 =(d-v)^2-(b-v)^2
\]
and simplify
\begin{equation}
\label{eq:5}
  a^2-c^2 -2(a-c)u = d^2-b^2 - 2(d-b)v.
\end{equation}
Equation~\eqref{eq:5} is linear in $u$ and $u$ is independent from $a,c$, similarly for $v,b,d$, so we might hope to apply Theorem~\ref{thm:3}.

Let 
\[
Q=\{(-2(a-c),a^2-c^2)\colon a,c\in A\}.
\]
Then the number of solutions to \eqref{eq:5} is $E(Q;A)$.

Note that $|Q|=|A|^2$, since the map
\[
(a,c)\mapsto (-2(a-c), a^2-c^2)
\]
is invertible.

Further, at most $2|A|$ points of $Q$ are contained in a single line, since for fixed $\alpha,\beta,\gamma$ the number of solutions to
\[
\alpha[-2(a-c)]+\beta(a^2-c^2)=\gamma
\]
is bounded by the maximum number of pairs $(a,c)$ of $A\times A$ that are contained in the quadratic curve
\[
-\alpha(x-y)+\beta(x^2-y^2)=\gamma.
\]
Given any $x$, there are at most two solutions for $y$.

Thus by Theorem~\ref{thm:3}, the number of solutions to \eqref{eq:4} is at most
\[
E(Q;A)\ll \frac{|A|^6}p+ (|A|^3)^{3/2} + 2|A|^3 \ll \frac{|A|^6}p+ |A|^{9/2}.
\]

By the pigeonhole principle, it follows that there is a pair $(u,v)$ in $A\times A$ such that the number of solutions to
\[
    (a-u)^2+(b-v)^2 =(c-u)^2+(d-v)^2\qquad a,b,c,d\in A
\]
is at most $O(|A|^4/p + |A|^{7/2})$.

By Cauchy-Schwarz we have
\[
|A|^4 \ll |(A-u)^2+(A-v)^2|\cdot \max(|A|^4/p,|A|^{7/2}),
\]
which implies the desired lower bound.
\end{proof}

See \cite{pham2016three-variable} for a generalization of this result to higher dimensions, as well as a general result on expanding quadratic polynomials.

\section{Incidence results for points and lines in $\F_p^2$}
\label{sec:incid-results-points}

\subsection{An incidence bound for Cartesian product point sets \(P=A\times B\)}

The following incidence bound is due to Stevens and de Zeeuw~\cite{stevens2016improved}.
\begin{thm}
  \label{thm:5}
Let $A$ and $B$ be subsets of $\F_p$ with $|A|\leq |B|$.
If $P=A\times B$ and $L$ is a set of lines in $\F_p^2$, then
\[
I(P,L) \ll \frac{|A||B|^{1/2}|L|}{p^{1/2}} + |A|^{3/4}|B|^{1/2}|L|^{3/4} + |P|^{2/3}|L|^{2/3} + |L|.
\]
\end{thm}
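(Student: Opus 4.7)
My plan is to deduce Theorem~\ref{thm:5} from Corollary~\ref{cor:2} via a Cauchy-Schwarz step followed by a threshold decomposition that controls the pencil term in the energy estimate.

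First I would dispose of the vertical lines in $L$: each one meets $P=A\times B$ in either $|B|$ points (if its $x$-coordinate lies in $A$) or none, so their total contribution to $I(P,L)$ is at most $|A||B|=|P|$, a quantity easily absorbed by the terms in the conclusion. For the remaining non-vertical lines, parameterize $\ell\in L$ as $\ell(x)=mx+b$ and set
\[
r(y) = \#\{(\ell,a)\in L\times A : \ell(a)=y\}.
\]
Then $I(P,L)=\sum_{y\in B}r(y)$, so Cauchy-Schwarz gives
\[
I(P,L)^{2} \le |B|\sum_{y}r(y)^{2} = |B|\cdot E(L,A),
\]
where $E(L,A)$ is precisely the energy bounded in Corollary~\ref{cor:2}.

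Applying Corollary~\ref{cor:2} and taking square roots reproduces the first two target terms $|A||B|^{1/2}|L|/p^{1/2}$ and $|A|^{3/4}|B|^{1/2}|L|^{3/4}$ immediately; the remaining obstruction is the pencil contribution $(k|L||A||B|)^{1/2}$, which with only the crude bound $k\le|A|$ is too large. To improve it, introduce a threshold $\tau>0$ and split $L=L^{\text{poor}}\sqcup L^{\text{rich}}$, where $L^{\text{rich}}$ consists of the lines incident to at least $\tau$ points of $P$. The poor lines contribute at most $\tau|L|$ to $I(P,L)$. The key geometric observation is that within $L^{\text{rich}}$ any concurrent or parallel pencil covers essentially disjoint $\ge\tau$-subsets of $P$, so the largest pencil size inside $L^{\text{rich}}$ is $\ll|P|/\tau$.

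Re-running the Cauchy-Schwarz and Corollary~\ref{cor:2} argument on $L^{\text{rich}}$ with $k\le\max(|A|,|P|/\tau)$ then contributes a term of order $(|P||L||A||B|/\tau)^{1/2}$. Balancing this against $\tau|L|$ by choosing $\tau\approx |P|^{2/3}/|L|^{1/3}$ produces the term $|P|^{2/3}|L|^{2/3}$ in the statement; the trailing $|L|$ term covers the degenerate regime where the optimal threshold is $\lesssim 1$ (or $>|A|$, in which case $L^{\text{rich}}$ is empty).

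The main obstacle is the bookkeeping: one must handle both branches of $\max(|A|,|P|/\tau)$ and verify that any spare contribution from the $k=|A|$ branch, such as $|A||B|^{1/2}|L|^{1/2}$, is absorbed by one of the four stated terms in every regime of the parameters $|A|\le|B|$, $|L|$, and $p$. A secondary subtlety is proving the pencil bound $\kappa(L^{\text{rich}})\ll|P|/\tau$ uniformly for concurrent, parallel, and "concurrent-through-a-point-of-$P$" pencils, where the incidence at the pencil centre could otherwise be double-counted.
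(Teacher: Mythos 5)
Your core engine is the same as the paper's: bound $I(P,L)\le |B|^{1/2}E(L,A)^{1/2}$ by Cauchy--Schwarz (this is exactly Lemma~\ref{lem:1}) and then invoke Corollary~\ref{cor:2}. Where you genuinely differ is in how the pencil parameter is tamed. The paper greedily prunes pencils of more than $k$ lines: each pruned pencil contributes at most $|A||B|+|P_i|$ incidences and there are at most $|L|/k$ prunings, giving $|A||B||L|/k+|L|$, and then $k=(|A||B||L|)^{1/3}$ balances this against $\sqrt{k|A||B||L|}$. You instead discard the lines with fewer than $\tau$ incidences (cost $\tau|L|$) and note that among $\tau$-rich lines every pencil, concurrent or parallel, has $O(|P|/\tau)$ members because the lines of a pencil meet $P$ in sets that are disjoint away from the pencil's centre (this needs $\tau\geq 2$, as you observe); the choice $\tau\approx|P|^{2/3}/|L|^{1/3}$ then yields the same terms $|P|^{2/3}|L|^{2/3}+|L|$, with the degenerate range $\tau\lesssim 1$ forced into $|L|\gtrsim |P|^2$, where $I(P,L)\le |L|+|P|^2\ll|L|$. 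So your rich/poor split is a clean and correct alternative to the paper's pencil pruning; it buys a one-step definition of the pruned set at the price of the (easy) disjointness lemma for rich pencils.

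Two caveats on the parts you flagged. First, your claim that the vertical lines' contribution $|A||B|$ is ``easily absorbed'' is not true in general: when $|L|\ll|P|^{1/2}$ the quantity $|P|$ exceeds every term on the right-hand side, and indeed with vertical lines admitted the stated bound can fail (take $L$ to be the $|A|$ vertical lines through $A$ with $|B|\gg|A|$, so $I=|A||B|$ while the right-hand side is $\ll |A|^{2/3}|P|^{2/3}$). The theorem must be read, as the paper implicitly does by writing every line as $y=mx+b$, as a statement about non-vertical lines (the paper's specialization \eqref{eq:23} reinstates a $+|P|$ term for exactly this reason). Second, the spare term $|A||B|^{1/2}|L|^{1/2}$ from the $k=|A|$ branch of $\max(|A|,\kappa)$ is genuinely \emph{not} dominated by the four stated terms when $|B||L|<|A|^2$; in that regime you must instead use the trivial bound $I(P,L)\le|A||L|$, which is at most $|A|^{3/4}|B|^{1/2}|L|^{3/4}$ there since $|A||L|<|A|^3/|B|\le|B|^2$. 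This is a fixable piece of bookkeeping rather than a flaw in the method --- the paper's own write-up silently elides the same $\max(|A|,\kappa)$ issue --- but your ``verify it is absorbed in every regime'' step cannot be carried out as literally stated without this extra fallback.
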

In particular, if $P=A\times A$, we have
\begin{equation}
  \label{eq:23}
  I(P,L)\ll \frac{|P|^{3/4}|L|}{p^{1/2}} + |P|^{5/8}|L|^{3/4} + |P| + |L|,
\end{equation}
since $|P|^{2/3}|L|^{2/3} > |P|^{5/8}|L|^{3/4}$ only when $|L| < |P|^{1/2}$, but in this case we have $I(P,L)\ll |P|$.
Further, if $|A||L| \ll p^2$, then the first term of \eqref{eq:23} is smaller than the second, so we have
\begin{equation}
  \label{eq:24}
    I(P,L)\ll |P|^{5/8}|L|^{3/4} + |P| + |L|.
\end{equation}

Before we prove Theorem~\ref{thm:5}, we prove a lemma that gives the correct leading terms.
\begin{lem}
\label{lem:1}
  For $P=A\times B$, as above, and any set of lines $L$, we have
\[
I(P,L) \leq |B|^{1/2}E(L,A)^{1/2}.
\]
\end{lem}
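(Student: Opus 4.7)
The plan is to express $I(P,L)$ as a sum over the $y$-coordinate $b \in B$ of the points of $P$, and then apply Cauchy-Schwarz in that variable \emph{only}; the key observation is that $E(L,A)$ does not involve the $y$-coordinate at all, so after Cauchy-Schwarz the $b$-sum can be freely extended from $B$ to all of $\F_p$, turning into $E(L,A)$ exactly.

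Concretely, I would assume (as in the setup defining $E(L,A)$) that the lines in $L$ are non-vertical, so each $\ell \in L$ has the form $\ell(x) = mx + c$. For each $a \in A$ and $y \in \F_p$, define
\[
r(a,y) = |\{\ell \in L : \ell(a) = y\}|,
\]
the number of lines of $L$ through the point $(a,y)$. Then
\[
I(P,L) = \sum_{b \in B} \sum_{a \in A} r(a,b).
\]
Applying Cauchy-Schwarz to the outer sum (treating $\sum_{a} r(a,b)$ as a function of $b$ paired against the indicator of $B$), I get
\[
I(P,L) \leq |B|^{1/2} \left( \sum_{b \in B} \Bigl( \sum_{a \in A} r(a,b) \Bigr)^{\!2} \right)^{\!1/2} \leq |B|^{1/2} \left( \sum_{b \in \F_p} \Bigl( \sum_{a \in A} r(a,b) \Bigr)^{\!2} \right)^{\!1/2},
\]
where the last step uses nonnegativity to enlarge the range of $b$.

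The final step is to recognize the inner double sum as $E(L,A)$: expanding the square and interchanging orders of summation,
\[
\sum_{b \in \F_p} \Bigl( \sum_{a \in A} r(a,b) \Bigr)^{\!2} = \sum_{a,a' \in A} \sum_{\ell, \ell' \in L} \sum_{b \in \F_p} [\ell(a) = b][\ell'(a') = b] = |\{(\ell,\ell',a,a') : \ell(a) = \ell'(a')\}| = E(L,A).
\]
Combining gives $I(P,L) \leq |B|^{1/2} E(L,A)^{1/2}$, as desired.

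There is no real obstacle here, but the conceptual point worth emphasizing is the asymmetric application of Cauchy-Schwarz: a naive application in both $a$ and $b$ would waste a factor of $|A|^{1/2}$, whereas $E(L,A)$ naturally averages only over $a$, so one should not Cauchy-Schwarz in $a$. The only caveat is the treatment of vertical lines, which are excluded from $E(L,A)$ by definition; if they appear in $L$ they can either be assumed absent or contribute separately at most $|B|$ incidences each and absorbed into a minor term, but since the lemma as stated matches the $E(L,A)$ framework I would simply restrict to non-vertical $L$.
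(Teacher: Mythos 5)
Your proof is correct and is essentially the same as the paper's: both write $I(P,L)$ as a sum over $b\in B$ of the number of incident pairs $(a,\ell)$, apply Cauchy--Schwarz in $b$, extend the sum to all of $\F_p$, and identify the resulting second moment with $E(L,A)$. The remark about vertical lines is a reasonable (and harmless) clarification, since the framework for $E(L,A)$ already identifies lines with affine maps $x\mapsto mx+b$.
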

Thus
\[
I(P,L)\ll \frac{|A||B|^{1/2}|L|}{p^{1/2}} + |A|^{3/4}|B|^{1/2}|L|^{3/4} + k(|A||B||L|)^{1/2}.
\]
A priori, we have no control over $k$, so Theorem~\ref{thm:5} does not follow immediately from Lemma~\ref{lem:1}.
\begin{proof}[Proof of Lemma~\ref{lem:1}]
  We have
\[
I(P,L) = |\{(a,b,\ell)\in A\times B\times L\colon b=\ell(a)\}| = \sum_{b\in B}|\{(a,\ell)\in A\times L\colon b=\ell(a)\}|.
\]
Thus by Cauchy-Schwarz,
\[
I(P,L)\leq |B|^{1/2} \left(\sum_b|\{(a,\ell)\in A\times L\colon b=\ell(a)\}|^2\right)^{1/2}.
\]
The sum over all $b$ in $\F_p$ is equal to $E(L,A)$; that is, it is equal to the number of solutions to
\[
\ell(a)=\ell'(a')
\]
with $\ell,\ell'$ in $L$ and $a,a'$ in $A$.
Thus
\[
I(P,L)\leq |B|^{1/2}E(L,A)^{1/2}.
\]
\end{proof}
To apply Lemma~\ref{lem:1}, we need to make sure that not too many lines of $L$ lie in a pencil.
\begin{proof}[Proof of Theorem~\ref{thm:5}]
Let $k > 0$ be a parameter that we will choose later.

We begin by pruning large pencils of lines from $L$.
Suppose that $L$ contains a pencil $P_1$ with more than $k$ lines.
This pencil contributes at most $|A||B| + |P_1|$ incidences.
Let $L_1=L\setminus P_1$.
We continue pruning pencils until we reach a set of lines $L'$ that contains no pencils of size greater than $k$.
This process takes at most $|L|/k$ steps, hence the lines removed contribute at most
\[
\sum_{i=1}^{|L|/k}(|A||B|+|P_i|) = \frac{|A||B||L|}k + |L|
\]
incidences.

By Lemma~\ref{lem:1} and Corollary~\ref{cor:2}, we have
\begin{align*}
  I(P,L') \leq |B|^{1/2}E(L,A)^{1/2}
 &\ll |B|^{1/2} \left( \frac{|L|^2|A|^2}p + (|L||A|)^{3/2} + k|L||A| \right)^{1/2}\\
&\ll\frac{|A||B|^{1/2}|L|}{p^{1/2}} + |A|^{3/4}|B|^{1/2}|L|^{3/4} + \sqrt{k|A||B||L|}.
\end{align*}

Since $I(P,L) = I(P,L') + I(P, L\setminus L')$, we have
\[
I(P,L)\ll \frac{|A||B|^{1/2}|L|}{p^{1/2}} + |A|^{3/4}|B|^{1/2}|L|^{3/4}+ \sqrt{k|A||B||L|} + \frac{|A||B||L|}k+|L|.
\]
Setting $k=(|A||B||L|)^{1/3}$ yields
\[
I(P,L) \ll \frac{|A||B|^{1/2}|L|}{p^{1/2}} + |A|^{3/4}|B|^{1/2}|L|^{3/4} + |P|^{2/3}|L|^{2/3} + |L|.
\]
\end{proof}

Note that we have
\[
I(A\times B, L)\leq |A||L| + |A||B|,
\]
so we have
\begin{enumerate}
\item $I(A\times B, L)\ll |A||B|^{1/2}|L|/p^{1/2}$ if $|A||L| > p^2$,
\item $I(A\times B, L)\ll |A|^{3/4}|B|^{1/2}|L|^{3/4}$ if $|B|^2 < |A||L| < p^2$, and
\item $I(A\times B, L)\ll |A||L| + |P|$ if $|A||L| < |B|^2$.
\end{enumerate}

\begin{exercise}
Theorem~\ref{thm:5} can be used to prove a number of sum-product results using Elekes' method \cite{elekes1997number}.
\begin{enumerate}
\item Use the lines $\ell_{a,b}(t)=a(t+b)$ with $a,b\in A$ and the point set $P=A\times A(A+A)$ to show that \[|A(A+A)|\gg \min(p, |A|^{3/2}).\]
\item Use the lines $\ell_{a,b}(t)=at+b$ with $a,b\in A$ and the point set $P=A\times (A+AA)$ to show that \[|A+AA|\gg \min(p, |A|^{3/2}).\]
\item Use the lines $\ell_{a,b}(t)=t/a + b$ or $\ell_{a,b}(t)=a(t-b)$ and a point set of the form $P=AA\times (A+A)$ or $P=(A+A)\times AA$ to show that 
\[
\max(|A+A|,|AA|)\gg \min(p^{1/3}|A|^{2/3},|A|^{6/5}).
\]
\end{enumerate}
\end{exercise}
The last part of the exercise implies that if $|A|\leq p^{5/8}$, then
\[
\max(|A+A|,|AA|)\gg |A|^{6/5}.
\]
Since $|A|^2/p^{1/2} > p^{1/3}|A|^{2/3}$ when $|A| > p^{5/8}$, the best known sum-product results in $\F_p$ can be summarized as
\begin{equation}
  \label{eq:30}
  \max(|A+A|, |AA|)\gg \min(\sqrt{p|A|}, |A|^2/p^{1/2}, |A|^{6/5}).
\end{equation}

In \cite{stevens2016improved}, Stevens and de Zeeuw use Theorem~\ref{thm:5} in conjunction with a clever induction argument to prove a point-line incidence bound for general point sets $P\subseteq\F_p^2$.
Namely, that for any set of lines $L$ in $\F_p^2$ such that $|P|^{7/8} < |L| < |P|^{8/7}$ and $|L|^{13}\ll p^{15}|P|^2$,
\[
I(P,L)\ll |P|^{11/15}|L|^{11/15}.
\]
Further applications of this bound and Theorem~\ref{thm:5} may be found in \cite{stevens2016improved}.

\subsection{A bound for the number of collinear triples in \(P=A\times A\)}
\label{sec:orgheadline16}

Given a subset $A$ of $\F_p$, let $T(A)$ denote the number of \emph{collinear triples}\/ of points in $P=A\times A$.

For any set $A$, we have $T(A)\ll |A|^5$, which we may see as follows.
Three points $(a,a'),(b,b'), (c,c')$ in $P=A\times A$ are collinear if
 \begin{equation}
   \label{eq:z25}
   \det
\begin{pmatrix}
  1 & 1 & 1\\
  a & b & c \\
  a'&b'&c'\\
\end{pmatrix}
=0.
 \end{equation}
Evaluating the determinant yields the equation
\begin{equation}
  \label{eq:31}
  (b-a)(c'-a')=(b'-a')(c-a).
\end{equation}
Since we have six variables in $|A|^6$ and one equation, we have $\ll |A|^5$ solutions.

Recall that to find lower bounds for $|A+AA|$ and $|A(A+A)|$, we found upper bounds for six variable energy-type equations.
It turns out that \eqref{eq:31} can be bounded in a similar way, leading to the following bound, due to \cite{yazici2015growth}, see also \cite{petridis2016collinear,petridis2016bounds}.
\begin{thm}
  \label{thm:6}
Let $A$ be a subset of $\F_p$.
If \(|A|\ll p^{2/3}\), then
\[
T(A)\ll \frac{|A|^6}p+|A|^{9/2}.
\]
\end{thm}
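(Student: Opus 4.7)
The plan is to realize $T(A)$ as a point-plane incidence count and invoke Corollary~\ref{cor:1}, following the template of the proof of Theorem~\ref{thm:2}. Expanding the determinant in \eqref{eq:z25} along its first row shows that the collinearity equation \eqref{eq:31} is equivalent to
\[
c'(b-a) + a'(c-b) + b'(a-c) = 0.
\]
For each $(a,b,c) \in A^3$ whose coordinates are not all equal, this is a linear equation in $(a', b', c')$ which defines a plane $\pi_{a,b,c}$ through the origin of $\F_p^3$ with normal $(c-b,\,a-c,\,b-a)$.

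I would set $P = A \times A \times A \subset \F_p^3$ (parameterized by $(a', b', c')$) and let $\Pi = \{\pi_{a, b, c} : (a, b, c) \in A^3\}$, viewed as a multi-set of size $|A|^3$, so that $T(A) = I(P, \Pi) + O(|A|^4)$, where the error term collects the degenerate $(a,b,c)$ with two or more coordinates coinciding (for which $\pi_{a,b,c}$ is either all of $\F_p^3$ or reduces to a coordinate hyperplane). Under the hypothesis $|A| \ll p^{2/3}$ we have $|P| = |\Pi| = |A|^3 \ll p^2$, so Corollary~\ref{cor:1} is applicable. Exactly as in the proof of Theorem~\ref{thm:2}, the Cartesian product $P = A^3$ has at most $|A|$ points on any line, so $k \leq |A|$, and Corollary~\ref{cor:1} yields
\[
I(P, \Pi) \ll \frac{|A|^6}{p} + |A|^{9/2} + |A| \cdot |A|^3 \ll \frac{|A|^6}{p} + |A|^{9/2},
\]
which combined with the $O(|A|^4)$ error gives the desired bound.

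The main obstacle is that $\Pi$ is naturally a multi-set rather than a set: the plane $\pi_{a, b, c}$ depends only on the direction of its normal, which is invariant under the translation $(a, b, c) \mapsto (a+t, b+t, c+t)$, so each plane can be repeated up to $|A|$ times. Either one must invoke the multi-set (weighted) version of Rudnev's bound, or reformulate the count as an $E(Q; A)$ energy via Theorem~\ref{thm:3} with an appropriate $Q \subset \F_p^2$ of size $|A|^2$ (for instance, arising from the rearrangement $(b-a)c' + ab' = (b'-a')c + ba'$) whose natural multiplicity matches that of $\Pi$; either route leads to the same final estimate.
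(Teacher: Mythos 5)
Your reduction of collinearity to the bilinear equation $c'(b-a)+a'(c-b)+b'(a-c)=0$ is correct, and so is the $O(|A|^4)$ accounting of degenerate triples, but the central step --- applying Corollary~\ref{cor:1} to $\Pi=\{\pi_{a,b,c}\colon (a,b,c)\in A^3\}$ ``viewed as a multi-set of size $|A|^3$'' --- is not valid, and the obstacle you flag at the end is fatal rather than cosmetic. Theorem~\ref{thm:1} and Corollary~\ref{cor:1} are statements about \emph{sets} of planes; the multiset form is false (a single plane repeated $N$ times, containing $N$ points of $P$ of which only $O(\sqrt N)$ are collinear, gives $N^2$ weighted incidences against the claimed $O(N^{3/2})$, even with $N\ll p^2$). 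Here the repetition is severe: $\pi_{a,b,c}$ passes through the origin and is determined by the single parameter $\rho=(b-a)/(c-a)\in\F_p\cup\{\infty\}$ (its normal is proportional to $(1-\rho,-1,\rho)$), so there are at most $p+1$ distinct planes, all containing the line $x=y=z$, and the multiplicity of the plane with parameter $\rho$ is $m_\rho=|\{(a,b,c)\in A^3\colon b-a=\rho(c-a)\}|$, which can be of order $|A|^2$ (e.g.\ for an arithmetic progression). Worse, up to the $O(|A|^4)$ degenerate terms $T(A)$ \emph{is} $\sum_\rho m_\rho^2$, so any honest weighted version of Rudnev's bound would require control of exactly the $L^2$-norm of the weights you are trying to bound: the ``invoke a multi-set version'' route is circular, not a citation away. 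Your fallback rearrangement $(b-a)c'+ab'=(b'-a')c+ba'$ does not rescue this: $b$ and $b'$ occur on both sides, so it is not of the form $ma+\beta=m'a'+\beta'$ with $(m,\beta),(m',\beta')\in Q$ and $a,a'\in A$ ranging independently; and since every monomial of the collinearity equation is a product of one unprimed and one primed variable, the only $3+3$ splits that are affine in the point coordinates are the unprimed/primed splits, which reproduce exactly the same origin-planes with the same multiplicities.

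What the paper does instead is desymmetrize \emph{before} building points and planes: away from the $O(|A|^4)$ degenerate tuples, divide \eqref{eq:31} by $(c-a)(c'-a')$ to get $\frac{b-a}{c-a}=\frac{b'-a'}{c'-a'}$, whose left side is $mb+\beta$ with $(m,\beta)=\bigl(\tfrac{1}{c-a},\tfrac{-a}{c-a}\bigr)$. The map $(a,c)\mapsto(m,\beta)$ is injective, so $Q=\{(1/(c-a),-a/(c-a))\colon a,c\in A\}$ has $|Q|=|A|^2$, at most $|A|$ points of $Q$ lie on any line, and $T(A)=E(Q;A)+O(|A|^4)$. Theorem~\ref{thm:3} then applies with genuinely distinct points and planes (parameterized bijectively by $A\times Q$ and $Q\times A$, i.e.\ by all six variables) and yields $T(A)\ll |A|^6/p+|A|^{9/2}$. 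To make your argument complete you would need to carry out this, or an equivalent, injective reparameterization explicitly; as written, the proposal has a genuine gap at its main step.
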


\begin{proof}
If $a,b\not=c$ and $a',b'\not=c$, then equation~\eqref{eq:31} reduces to
\begin{equation}
  \label{eq:z26}
  \frac{b-a}{c-a}=\frac{b'-a'}{c'-a'}.
\end{equation}
Since the number of collinear triples where $a=c, b=c, a'=c',$ or $b'=c'$ is $O(|A|^4)$, we have
\begin{equation}
  \label{eq:z27}
  T(A) = \left|\left\{(a,\ldots,c')\in A^6\colon   \frac{b-a}{c-a}=\frac{b'-a'}{c'-a'} \neq 0,\infty\right\}\right|+O(|A|^4).
\end{equation}

Thus to bound $T(A)$, it suffices to count the number of solutions to \eqref{eq:z26} with $a,b,c,a',b',c'$ in $A$.
We apply Theorem~\ref{thm:3} to \eqref{eq:z26}.

Let 
\[
Q=\{(1/(c-a), -a/(c-a))\colon a,c\in A\}.
\]

By \eqref{eq:z27} and our definition of $Q$, it follows that $T(A) = E(Q;A)+O(|A|^4)$.
The proposition will follow from Theorem~\ref{thm:3} if we can show that $|Q|=|A|^2$ and $k\leq |A|$, since then
\[
E(Q;A)\ll \frac{|A|^6}p+ (|A|^3)^{3/2}+|A|^4 \ll \frac{|A|^6}p+ |A|^{9/2}.
\]

First $|Q|=|A|^2$, since every $(x,y)\in Q$ corresponds to a unique pair $(c,a)$ in $A\times A$, where
\[
a=-\frac yx \quad\mbox{and}\quad c =\frac 1x -\frac yx.
\]
Second, to show that $k\leq |A|$ we must show that at most $k$ points of $Q$ are collinear.
Consider the linear equation $\alpha x+\beta y=\gamma$ with $\alpha,\beta,$ and $\gamma$ fixed; suppose one of $\alpha,\beta$  equals $1$.
Plugging in $x=1/(c-a)$ and $y=-a/(c-a)$ yields the equation
\[
\alpha-\beta a=\gamma(c-a),
\]
which has at most $|A|$ solutions $(a,c)$, as required.
\end{proof}

The number of collinear triples $T(A)$ can be expressed in terms of the multiplicative energy of shifts of $A$:
\begin{equation}
  \label{eq:33}
  T(A) = \sum_{a,a'\in A}E^\times(A-a,A-a').
\end{equation}
This is easy to see from \eqref{eq:31}.
We first learned of equation~\eqref{eq:33} in \cite{roche-newton2015short}, and the proof there inspired the proof of Theorem~\ref{thm:6}.

The following easy corollary was used in \cite{yazici2015growth} to prove an incidence bound for points and lines (which has since been subsumed by Theorem~\ref{thm:5}).
\begin{cor}
  \label{cor:3}
Let $A$ be a subset of $\F_p$ with $|A|<p^{2/3}$ and let $L_k$ denote the set of lines containing at least $k$ points of $P=A\times A$.
If $k > 3$, then
\[
|L_k| \ll\frac{|A|^{9/2}}{k^3}.
\]
\end{cor}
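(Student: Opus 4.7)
The plan is to bound $|L_k|$ by double counting collinear triples in $P = A \times A$. Every line $\ell \in L_k$ contains at least $k$ points of $P$, hence contributes at least $\binom{k}{3}$ ordered collinear triples (well, unordered; we may pass to ordered triples at the cost of a constant). For $k > 3$, we have $\binom{k}{3} \gg k^3$, so
\[
|L_k| \cdot k^3 \ll T(A).
\]

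Next, I would invoke Theorem~\ref{thm:6} to bound $T(A)$. Since $|A| < p^{2/3}$, the two terms in the bound $T(A) \ll |A|^6/p + |A|^{9/2}$ can be compared: we have $|A|^6/p \leq |A|^{9/2}$ precisely when $|A|^{3/2} \leq p$, which is equivalent to $|A| \leq p^{2/3}$, and this is exactly our hypothesis. So the second term dominates and $T(A) \ll |A|^{9/2}$.

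Combining the two inequalities yields $|L_k| \ll |A|^{9/2}/k^3$, as claimed. The only mildly delicate point is the passage from ``at least $k$ points on a line'' to ``$\gg k^3$ collinear triples'', which requires $k$ to be bounded away from $3$; the hypothesis $k > 3$ (i.e.\ $k \geq 4$) is enough, since then $\binom{k}{3} = k(k-1)(k-2)/6 \geq k^3/48$. No other obstacles arise — the content of the corollary is essentially a repackaging of Theorem~\ref{thm:6}.
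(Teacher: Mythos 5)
Your proposal is correct and is essentially the paper's own argument: double-count collinear triples via $\binom{k}{3}|L_k| \leq T(A)$, then apply Theorem~\ref{thm:6} and observe that $|A| < p^{2/3}$ makes the $|A|^{9/2}$ term dominate. The extra care you take in checking $\binom{k}{3}\gg k^3$ for $k\geq 4$ and in comparing the two terms of the triple bound is exactly the (implicit) content of the paper's one-line proof.
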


\begin{proof}
We have
\[
{k\choose 3}|L_k| \leq \sum_{\ell\in L_k}{|P\cap \ell|\choose 3} \ll T(A) \ll |A|^{9/2}.
\]
Since $k > 3$, we have ${k\choose 3}\gg k^3$, so the bound follows.
\end{proof}
Theorem~\ref{thm:5} implies that
\begin{equation}
  \label{eq:32}
  |L_k| \ll \frac{|A|^5}{k^4}
\end{equation}
for $k > |A|^{3/2}/p^{1/2}$.
In Lemma~\ref{lem:2}, we show that the same bound actually holds whenever $k>2|A|^2/p$.
The bound \eqref{eq:32} is essentially equivalent to the statement that for $|A|<p^{2/3}$, the point set $A\times A$ determines $\ll |A|^5\log(|A|)$ collinear \emph{quadruples}.
Given such a bound for collinear quadruples, we may recover \eqref{eq:32} by the same method used to prove Corollary~\ref{cor:3}.
See \cite{petridis2016collinear} for further discussion.

\section{An expander below the \(p^{2/3}\) threshold}
\label{sec:orgheadline20}

In this section, we prove the following theorem due to the second listed author~\cite{petridis2016products}:
\begin{thm}
  \label{thm:7}
Let $p$ be a prime and let $A$ be a subset of $\F_p$.
Then the number of solutions to
\begin{equation}
  \label{eq:9}
  (a-b)(c-d)=(a'-b')(c'-d')\quad\mbox{with $a,b,c,d,a',b',c',d'$ in $A$}
\end{equation}
is $|A|^8/p + O(p^{2/3}|A|^{16/3})$.

Hence if $|A|\gg p^{5/8}$, then the number of solution is $O(|A|^8/p)$ and hence
\[
|(A-A)(A-A)|\gg p.
\]
\end{thm}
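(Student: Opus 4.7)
The plan is a Cauchy-Schwarz reduction followed by a weighted point-plane incidence estimate. Let $T$ denote the number of solutions to equation (9), and set $r(x) = |\{(a,b,c,d)\in A^4 : (a-b)(c-d) = x\}|$, so that $\sum_x r(x) = |A|^4$ and $\sum_x r(x)^2 = T$. Cauchy-Schwarz then gives $|A|^8 \leq |(A-A)(A-A)|\,T$. Granted the bound $T \ll |A|^8/p + p^{2/3}|A|^{16/3}$, the hypothesis $|A| \gg p^{5/8}$ makes the first term dominate, so $T \ll |A|^8/p$ and hence $|(A-A)(A-A)| \gg p$. The content of the theorem is therefore the upper bound on $T$.

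To bound $T$, I would rewrite the equation as $(c-d)\,a + \bigl[-b(c-d)\bigr] = (c'-d')\,a' + \bigl[-b'(c'-d')\bigr]$, placing it in the form $Ma + \beta = M'a' + \beta'$ counted by the energy $E(Q;A)$ of Theorem 3, where $Q$ is the multiset $\{(c-d,\,-b(c-d)) : b, c, d \in A\}$ in $\F_p^2$. As a set, $Q$ lies on $|A-A|$ ``vertical'' segments of length at most $|A|$ (one for each value of $M = c-d$); as a multiset it has total mass $|A|^3$, with $(M,-bM)$ appearing with multiplicity $r_{A-A}(M) = |\{(c,d) \in A^2 : c - d = M\}|$. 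The plan is to run through the proof of Theorem 3, applying Rudnev's incidence bound (Corollary 1) to the point set $A \times Q$ and the associated family of planes, with the multiplicities tracked throughout.

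The main obstacle is precisely this multiplicity handling. A direct application of Corollary 1 to the underlying set of $Q$, with the maximum multiplicity (of size $|A|^2$) pulled out in front, yields an error term of order $|A|^{13/2}$; this is exactly the $p^{2/3}$-threshold bound that appears elsewhere in the paper, and it fails to cross the new $p^{5/8}$ threshold. To obtain the sharper error $p^{2/3}|A|^{16/3}$, one must exploit the $L^2$-structure of the multiplicity function rather than its $L^\infty$-bound, via the identity $\sum_{M} r_{A-A}(M)^2 = E_+(A)$. Cleanly, one implements this by dyadically decomposing $Q$ according to the level sets of the multiplicity and applying Corollary 1 separately on each level before summing; the $N^2/p$ main term of Rudnev supplies the $|A|^8/p$ contribution, and the weighted $N^{3/2}$ term aggregates to the claimed $p^{2/3}|A|^{16/3}$, completing the bound on $T$ and thus the theorem.
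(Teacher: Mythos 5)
Your opening is the same as the paper's (Cauchy--Schwarz against $r(x)=r_{(A-A)(A-A)}(x)$), and you correctly identify the real difficulty: the multiplicity $r_{A-A}(M)$ carried by the point/line family $\{(c-d,\,-b(c-d))\}$. But the proposed fix --- dyadic decomposition of $Q$ by the level sets of $r_{A-A}$ together with the identity $\sum_M r_{A-A}(M)^2=E^+(A)$ --- does not produce the error term $p^{2/3}|A|^{16/3}$. The additive energy has no universal bound better than $E^+(A)\le |A|^3$, and the theorem assumes nothing about it; moreover the worst weight distribution (e.g.\ $A$ an arithmetic progression, where $r_{A-A}(M)\sim|A|$ for $\sim|A|$ values of $M$) puts essentially all of the mass in a \emph{single} dyadic level, so the decomposition gains nothing over pulling out the maximal multiplicity. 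On that level the set of points has size $\sim|A|^2$ and weight $\sim|A|$, and the middle term of Theorem~\ref{thm:3} contributes $\sim |A|^2\cdot\bigl(|A|^2\cdot|A|\bigr)^{3/2}=|A|^{13/2}$; no allocation of the $\ell^1$, $\ell^2$ and $\ell^\infty$ statistics of $r_{A-A}$ across levels (total mass $\le|A|^2$, second moment $\le E^+(A)\le|A|^3$, maximum $\le|A|$) does better than $|A|^{13/2}$ up to logarithms. Since $|A|^{13/2}>p^{2/3}|A|^{16/3}$ precisely when $|A|>p^{4/7}$, and $|A|^{13/2}>|A|^8/p$ for $|A|<p^{2/3}$, your bound fails exactly in the range the theorem is about: at $|A|\sim p^{5/8}$ it only yields $|(A-A)(A-A)|\gg|A|^{3/2}=p^{15/16}$, i.e.\ the old $p^{2/3}$-threshold statement, not $\gg p$.

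The paper's proof of Theorem~\ref{thm:7} uses ingredients that go beyond a single (even weighted) application of Rudnev's bound, and your outline has no substitute for them. It writes $\sum_x r^2(x)=\sum_{\xi\ne0}Q_\xi^2+O(|A|^6)$ with $Q_\xi=E^+(A,\xi A)-|A|^2$, and then (i) extracts the main term $|A|^8/p$ via the nonnegative quantity $E_\xi=E^+(A,\xi A)-|A|^4/p$ and the complete-sum bound $\sum_{\xi\ne0}E_\xi\le p|A|^2$, a finite-field second-moment identity with no analogue in your set-up; and (ii) controls the remainder $\sum_\xi Q_\xi E_\xi$ by splitting according to whether $Q_\xi>|A|^3/K$ and invoking Lemma~\ref{lem:3}, $\sum_{\xi\in X}E^+(A,\xi A)\ll|A|^3|X|^{2/3}$, whose proof rests on the rich-line bound $|L_t|\ll|A|^5/t^4$ of Lemma~\ref{lem:2} and hence on the Stevens--de Zeeuw incidence theorem (Theorem~\ref{thm:5}), i.e.\ on Rudnev's bound strengthened by pencil pruning. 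Choosing $K=(p/|A|)^{1/3}$ gives $p^{2/3}|A|^{16/3}$. Some mechanism of this kind, exploiting how the energies $E^+(A,\xi A)$ vary with $\xi$ rather than just the size statistics of the multiplicity, is what crosses the $p^{4/7}$/$p^{2/3}$ barrier; without it your argument stalls at $|A|^{13/2}$.
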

This result is that it says that $|(A-A)(A-A)|$ is nearly as large as possible when $|A|$ is at least $p^{5/8}$, which is lower than the $p^{2/3}$ threshold.
Subsequently, Rudnev, Shkredov, and Stevens \cite{rudnev2016energy} proved that
\[
\left| \left\{ \frac{ab-c}{a-d}\colon a,b,c,d\in A\right\}\right| \gg p
\]
whenever $|A|\gg p^{25/42- o(1)}$, which also breaks the $p^{2/3}$ threshold.
Recently, the authors, together with Roche-Newton, Rudnev, and Shkredov \cite{murphy2017results} have proved several results that pass the $p^{2/3}$ threshold.
For instance,
\[
|R[A]| = \left| \left\{ \frac{b-a}{c-a}\colon a,b,c\in A\right\}\right| \gg p
\]
whenever $|A|\geq p^{3/5}$, and
\[
|R[A]| \gg \frac{|A|^{8/5}}{\log^2(|A|)}
\]
whenever $|A|\leq p^{5/12}$.

\begin{proof}[Proof of Theorem~\ref{thm:7}]
As before, we use an energy-type argument: let $r(x)=r_{(A-A)(A-A)}(x)$.
Then $r(x)$ is supported on $(A-A)(A-A)$ and $\sum_x r(x) = |A|^4$, thus
\[
|A|^8 \leq |(A-A)(A-A)|\sum_{x} r^2(x).
\]
The second moment of $r(x)$ counts solutions to equation~\eqref{eq:9}.

There are $O(|A|^6)$ solutions where either side of \eqref{eq:9} is zero, thus we have
\begin{equation}
  \label{eq:10}
\sum_x r^2(x) = \left|\left\{\frac{a-b}{a'-b'}=\frac{c-d}{c'-d'}\not=0,\infty\right\}\right| + O(|A|^6).  
\end{equation}
We can write this quantity as a second moment of a different function, which we will call $Q_\xi$:
\begin{equation}
  \label{eq:11}
  Q_\xi := \left|\left\{(a,b,c,d)\in A^4\colon\frac{a-b}{c-d}=\xi\right\}\right|.
\end{equation}
Then by \eqref{eq:10} and \eqref{eq:11} we have
\begin{equation}
  \label{eq:12}
  \sum_x r^2(x) = \sum_{\xi\not=0}Q_\xi^2 + O(|A|^6).
\end{equation}

The following lemma provides the necessary bound for the second moment of $Q_\xi$:
\begin{lem}
  \label{lem:4}
\[
\sum_{\xi\not=0}Q_\xi^2 \leq \frac{|A|^8}p + O(p^{2/3}|A|^{16/3}).
\]
\end{lem}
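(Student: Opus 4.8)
The plan is to bound $\sum_{\xi\neq0}Q_\xi^2$ by recognising it as a point-plane incidence count and feeding it into Rudnev's bound (Theorem~\ref{thm:1}). First I would unwind the definition: $\sum_{\xi\neq0}Q_\xi^2$ is the number of octuples $(a,b,c,d,a',b',c',d')\in A^8$ solving $(a-b)(c'-d')=(a'-b')(c-d)$ with both sides nonzero. Dividing, this is exactly $\frac{a-b}{c-d}=\frac{a'-b'}{c'-d'}$, that is $\ell_{b,c,d}(a)=\ell_{b',c',d'}(a')$ for the affine maps $\ell_{b,c,d}(x)=(x-b)/(c-d)$. Thus the quantity is morally an energy of the shape $E(Q;A)$ from Theorem~\ref{thm:3}, or $E(L,A)$ from Corollary~\ref{cor:2}, with the multiplier--shift pairs $(m,\beta)=(1/(c-d),-b/(c-d))$ playing the role of $Q$. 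Equivalently, the equation is bilinear and asserts that the two vectors $(a-b,c-d)$ and $(a'-b',c'-d')$ are parallel.

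Granting this interpretation, I would set the count up as an incidence problem between a genuine Cartesian point set and a family of planes, exactly as in the proofs of Theorem~\ref{thm:2} and Theorem~\ref{thm:3}. Taking the three point coordinates from variables ranging freely over $A$ keeps the point set a product, so at most $|A|$ of its points lie on any line. Applying Rudnev's bound through Corollary~\ref{cor:1}, the main term $|A|^8/p$ should come from the complete-bipartite term $N^2/p$: a random pair of difference vectors in $\F_p^2$ is parallel with probability $\approx1/p$, so $|A|^8/p$ is the expected count, and the incidence machinery is what certifies that the actual count does not exceed it by much. The surplus over the main term is then governed by the $N^{3/2}$ and $k\,N$ terms of the bound, and the target is to show these total $\ll p^{2/3}|A|^{16/3}$.

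The hard part, and the reason this is a separate lemma rather than a direct appeal to Theorem~\ref{thm:3}, is that the equation depends on the unprimed variables only through the differences $a-b$ and $c-d$. Hence the natural set $Q=\{(1/(c-d),-b/(c-d))\}$ is really a multiset: the map $(b,c,d)\mapsto(1/(c-d),-b/(c-d))$ is about $|A|$-to-one, so as a genuine subset of $\F_p^2$ it has size only $\approx|A|^2$, and Theorem~\ref{thm:3} applied to it would produce the wrong main term $|A|^6/p$. The weights are the representation function $r_{A-A}$, they cannot be discarded, and a naive weighted (eigenvalue) incidence bound turns out to be self-referential and far too lossy, since a popular direction such as $a-b=c-d$ already contributes the full additive energy of $A$. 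The route I would take is a dyadic decomposition over the popularity of the differences: partition $A-A$ into level sets $D_j=\{v:\,2^j\le r_{A-A}(v)<2^{j+1}\}$, so that $\sum_j 2^j|D_j|=|A|^2$, replace the weighted energy by genuinely Cartesian sets $D_i\times D_j$ at each scale, count the parallel (multiplicative-energy-type) quadruples between these sets by Rudnev's bound per scale, and sum. The weighted main terms then telescope, since $\tfrac1p\prod\big(\sum 2^{\,\cdot}|D_\cdot|\big)=|A|^8/p$ exactly; the remaining task is to balance the geometric error over the $O(\log^4|A|)$ scales while keeping the number of collinear points controlled. Carrying out this optimisation is where the exponent $p^{2/3}|A|^{16/3}$ appears in place of the threshold-typical $|A|^{9/2}$, and it is the main obstacle; any spurious logarithmic loss from the decomposition would have to be removed afterwards by a more careful weighted version of the incidence estimate.
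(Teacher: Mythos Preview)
Your setup is sound: you correctly identify the quantity as an $E(Q;A)$-type energy for the affine maps $x\mapsto (x-b)/(c-d)$, and you correctly spot the obstruction, namely that the parameterising set $Q$ is really a multiset with multiplicities $r_{A-A}(c-d)$. The proposed fix, however, does not close the gap.

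After your decomposition, the per-scale quantity you must bound is the number of solutions to $uv'=u'v$ with $u\in D_i,\,v\in D_j,\,u'\in D_k,\,v'\in D_l$. This is a \emph{multiplicative} energy of arbitrary subsets of $\F_p^\ast$; Rudnev's point--plane bound encodes affine equations $ma+b=m'a'+b'$ and has no direct purchase here (the associated lines all pass through the origin). If instead you re-expand $u=a-b$, $v=c-d$ and apply the $E(Q;A)$ machinery at each scale with $Q_j=\{(1/v,-b/v):b\in A,\,v\in D_j\}$, the main terms do telescope to $|A|^8/p$ as you say, but the Rudnev error term $|P|^{1/2}|\Pi|$ gives a contribution $\sum_{j,j'}2^{j+j'}|A|^3|D_j|^{1/2}|D_{j'}|$. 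Using only $\sum_j 2^j|D_j|=|A|^2$ and $2^j\le|A|$ one gets $\sum_j 2^j|D_j|^{1/2}\ll|A|^{3/2}$, hence a total error $\ll|A|^{13/2}$. This exceeds the target $p^{2/3}|A|^{16/3}$ exactly when $|A|>p^{4/7}$, so the argument fails in the regime $|A|\approx p^{5/8}$ where the lemma is actually used. The loss is polynomial, not logarithmic, and it is realised when $A$ has small difference set (so that the weights $r_{A-A}$ are concentrated at high multiplicity); a ``more careful weighted version'' of the same incidence estimate will not recover it.

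The paper proceeds quite differently. First, the main term is not recovered by telescoping but by writing $Q_\xi=E_\xi+(|A|^4/p-|A|^2)$ with $E_\xi:=E^+(A,\xi A)-|A|^4/p\ge 0$ (non-negativity is by Cauchy--Schwarz), which yields
\[
\sum_{\xi\neq0}Q_\xi^2\le \frac{|A|^8}{p}+\sum_{\xi\neq0}Q_\xi E_\xi.
\]
Second, the remainder $\sum Q_\xi E_\xi$ is split at a threshold $Q_\xi>|A|^3/K$. The tail uses $\sum_{\xi}E_\xi\le p|A|^2$; the head uses Lemma~\ref{lem:3}, namely $\sum_{\xi\in X}E^+(A,\xi A)\ll|A|^3|X|^{2/3}$, both to bound the sum over the head set $B_K$ and to show $|B_K|\ll K^3$. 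Optimising $K=(p/|A|)^{1/3}$ yields $p^{2/3}|A|^{16/3}$. Lemma~\ref{lem:3} in turn rests on the rich-line bound $|L_t|\ll|A|^5/t^4$ of Lemma~\ref{lem:2}, which requires \emph{both} the Stevens--de~Zeeuw incidence theorem (Theorem~\ref{thm:5}) for large $t$ and the $L^2$ identity \eqref{eq:34} for $t$ down to $2|A|^2/p$. Neither of these ingredients appears in your plan, and they are what push the argument below the $p^{2/3}$ threshold.
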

We defer the proof of Lemma~\ref{lem:4}, and finish the proof of Theorem~\ref{thm:7}.

Combining \eqref{eq:12} with Lemma~\ref{lem:4} yields
\[
  \sum_x r^2(x) \leq  \frac{|A|^8}p + O(|A|^6 + p^{2/3}|A|^{16/3}).
\]
Since $|A|^6\ll p^{2/3}|A|^{16/3}$ for all $A$, we have
\begin{equation}
  \label{eq:21}
  \sum_x r^2(x) \leq  \frac{|A|^8}p + O(p^{2/3}|A|^{16/3}),
\end{equation}
as claimed.

If $|A| \geq p^{5/8}$, then $\sum_x r^2(x) \ll |A|^8/p$, so $|(A-A)(A-A)| \gg p$.
\end{proof}

Now we prove Lemma~\ref{lem:4}.
\begin{proof}[Proof of Lemma~\ref{lem:4}]
To begin, we record some basic facts about $Q_\xi$ and introduce a related quantity, $E_\xi$.
For $\xi\not=0$, we have
\begin{equation}
  \label{eq:13}
  Q_\xi = |\{(a,b,c,d)\in A^4\colon a-\xi c = b-\xi d, \,a\not=b, c\not=d\}| = E^+(A,\xi A) - |A|^2.
\end{equation}
Since
\[
\sum_{\xi\not=0}Q_\xi = |A|^2(|A|-1)^2,
\]
we have
\begin{equation}
  \label{eq:14}
  \sum_{\xi\in X}E^+(A,\xi A) =  \sum_{\xi\in X} \left( Q_\xi + |A|^2 \right) \leq |A|^4 + |X||A|^2.
\end{equation}

It follows from \eqref{eq:14} that if we set
\[
E_\xi = E^+(A,\xi A) -\frac{|A|^4}p,
\]
then
\begin{equation}
  \label{eq:15}
\sum_{\xi\not=0}E_\xi \leq p|A|^2.  
\end{equation}
The quantity $E_\xi$ is useful because it is \emph{non-negative}: by Cauchy-Schwarz,
\[
E^+(A,\xi A)\geq\frac{|A|^4}{|A\pm \xi A|}\geq \frac{|A|^4}p.
\]

Now we will estimate the second moment of $Q_\xi$.
To begin, we replace one power of $Q_\xi$ by $E_\xi$ and estimate the error:
\begin{align*}
  \sum_{\xi\not=0}Q_\xi^2&= \sum_{\xi\not=0} Q_\xi \left( E^+(A,\xi A) - |A|^2 \right)\\
&= \sum_{\xi\not=0} Q_\xi \left( E_\xi +\frac{|A|^4}p - |A|^2 \right)\\
&\leq \sum_{\xi\not=0} Q_\xi E_\xi +\frac{|A|^4}p\sum_{\xi\not=0} Q_\xi\\
&\leq \frac{|A|^8}p +\sum_{\xi\not=0} Q_\xi E_\xi.
\end{align*}
Thus by \eqref{eq:12},
\begin{equation}
  \label{eq:20}
\sum_x r^2(x) =  \sum_{\xi\not=0}Q_\xi^2 +O(|A|^6) \leq  \frac{|A|^8}p +\sum_{\xi\not=0} Q_\xi E_\xi + O(|A|^6).
\end{equation}

Now, to estimate the sum over $\xi$, we divide into two cases.
Let $B_K=\{\xi\not=0\colon Q_\xi > |A|^3/K\}$.
Then
\begin{equation}
  \label{eq:16}
  \sum_{\xi\not=0}Q_\xi E_\xi \leq \sum_{\xi\in B_K}Q_\xi E_\xi + \frac{|A|^3}K \sum_{\xi\not=0} E_\xi= I + II.
\end{equation}
We bound second term by \eqref{eq:15}:
\begin{equation}
  \label{eq:17}
  II = \frac{|A|^3}K \sum_{\xi\not=0} E_\xi \leq \frac{p|A|^5}K.
\end{equation}

To bound the first term, we use the trivial bound $|Q_\xi|\leq |A|^3$ to find
\begin{equation}
  \label{eq:18}
  I =\sum_{\xi\in B_K}Q_\xi E_\xi  \leq |A|^3 \sum_{\xi\in B_K}E_\xi \leq |A|^3\sum_{\xi\in B_K}E^+(A,\xi A).
\end{equation}
To bound this last sum, we use the following Lemma, which we will prove in the next section.
\begin{lem}
  \label{lem:3}
If $|A|\ll p^{2/3}$, then for any $X\subseteq \F_p$ such that $|X|\leq |A|^3$,
\[
\sum_{\xi\in X}E^+(A,\xi A) \ll |A|^3|X|^{2/3}.
\]
\end{lem}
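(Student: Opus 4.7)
The plan is to interpret $\sum_{\xi\in X}E^+(A,\xi A)$ as a second-moment count of incidences between $P=A\times A$ and a family of lines in $\F_p^2$, and to bound this count via a dyadic decomposition together with Theorem~\ref{thm:5}. For $\xi\in X\setminus\{0\}$ and $s\in\F_p$, the equation $a+\xi c=s$ cuts out a line $\ell_{\xi,s}\subseteq\F_p^2$ of slope $-1/\xi$, so $r_\xi(s):=|\{(a,c)\in A^2 : a+\xi c=s\}|=|P\cap\ell_{\xi,s}|$. Since $E^+(A,\xi A)=\sum_s r_\xi(s)^2$, summing in $\xi$ yields
\[
\sum_{\xi\in X}E^+(A,\xi A) = \sum_{\ell\in L}|P\cap\ell|^2,
\]
where $L$ is the union of the $|X|$ parallel pencils whose slopes lie in $\{-1/\xi : \xi\in X\setminus\{0\}\}$.

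Next, I would dyadically partition $L$ by incidence count, writing $L_k:=\{\ell\in L : k\leq|P\cap\ell|<2k\}$ for $k=2^j$, so that $\sum_\ell|P\cap\ell|^2\ll\sum_k k^2|L_k|$. The strategy is to control $|L_k|$ by two complementary bounds. For each fixed slope the lines partition $\F_p^2$ and collectively receive $|A|^2$ incidences from $P$, so at most $|A|^2/k$ lines of that slope carry $\geq k$ points; summing over the $|X|$ slopes gives $|L_k|\leq|X||A|^2/k$. On the other hand, applying Theorem~\ref{thm:5} to the Cartesian product $P=A\times A$ and to $L_k$, and combining with $I(P,L_k)\geq k|L_k|$, the $|A|^{5/4}|L_k|^{3/4}$ term produces $|L_k|\ll|A|^5/k^4$; here $|A|\ll p^{2/3}$ is invoked to prevent the $p$-dependent term in Theorem~\ref{thm:5} from taking over. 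The two bounds balance at $k^*=|A|/|X|^{1/3}$, and the two dyadic contributions
\[
\sum_{k\leq k^*}k^2\cdot\frac{|X||A|^2}{k}\ll|X||A|^2\cdot k^* = |X|^{2/3}|A|^3,\qquad
\sum_{k>k^*}k^2\cdot\frac{|A|^5}{k^4}\ll\frac{|A|^5}{(k^*)^2}=|X|^{2/3}|A|^3
\]
both evaluate to $|X|^{2/3}|A|^3$, yielding the desired estimate.

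The main obstacle is verifying that Theorem~\ref{thm:5} really gives the bound $|L_k|\ll|A|^5/k^4$ uniformly over the dyadic range that matters: the $p$-dependent term $|A|^{3/2}|L_k|/p^{1/2}$ and the $|P|^{2/3}|L_k|^{2/3}$ term in that incidence bound must both be checked not to dictate a different, weaker bound in the ranges of $k$ and $|L_k|$ that actually appear. The hypothesis $|A|\ll p^{2/3}$ together with $|X|\leq|A|^3$ is precisely what keeps $|L_k|$ in the regime where the $p$-term is controlled, and the $|P|^{2/3}|L_k|^{2/3}$ term only ever strengthens (or is comparable to) the bound extracted from the $|A|^{5/4}|L_k|^{3/4}$ term for $k\leq|A|$, so neither competitor derails the dyadic sum.
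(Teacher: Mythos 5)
Your overall strategy --- writing the sum as $\sum_{\ell\in L}|P\cap\ell|^2$ over a union of pencils, decomposing dyadically by the number of points on a line, and balancing a trivial bound against a ``rich lines'' bound $|L_k|\ll |A|^5/k^4$ at the threshold $k^*=|A|/|X|^{1/3}$ --- is exactly the shape of the paper's argument. The gap is in how you obtain $|L_k|\ll |A|^5/k^4$: you claim it follows from Theorem~\ref{thm:5} alone, with $|A|\ll p^{2/3}$ preventing the $p$-dependent term from interfering, but this is not verified and is in fact false in part of the range you need. From $k|L_k|\leq I(P,L_k)$ and Theorem~\ref{thm:5}, the term $|A|^{3/2}|L_k|/p^{1/2}$ can only be absorbed into the left-hand side when $k\gg |A|^{3/2}/p^{1/2}$; for $k\ll |A|^{3/2}/p^{1/2}$ the resulting inequality is vacuous and gives no bound on $|L_k|$ at all. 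The range $k^*\lesssim k\lesssim |A|^{3/2}/p^{1/2}$ is nonempty precisely when $|A|\gg p^{1/3}$ and $|X|$ is large: take $|A|\sim p^{1/2}$ and $|X|\sim p$ (allowed, since $|X|\leq\min(p,|A|^3)$), so that $k^*\sim p^{1/6}$ while $|A|^{3/2}/p^{1/2}\sim p^{1/4}$. In that window your per-slope bound $|L_k|\leq |X||A|^2/k$ is also too weak: summing $k^2|L_k|\leq k|X||A|^2$ up to $k\sim|A|^{3/2}/p^{1/2}$ gives $|X||A|^{7/2}/p^{1/2}\sim p^{9/4}$, which exceeds the target $|A|^3|X|^{2/3}\sim p^{13/6}$. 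So as written the argument does not close, and the hypothesis $|A|\ll p^{2/3}$ does not play the role you assign to it.

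What is missing is exactly the content of the paper's Lemma~\ref{lem:2}: the bound $|L_t|\ll |A|^5/t^4$ for all $t>\min(2|A|^2/p,1)$, proved by supplementing Theorem~\ref{thm:5} with the second-moment line identity \eqref{eq:34}, $\sum_{\ell}\bigl(i(\ell)-|A|^2/p\bigr)^2\leq p|A|^2$, from Bourgain--Katz--Tao and \cite{murphy2016point-line}. That identity yields $|L_t|\ll p|A|^2/t^2$, which is $\ll |A|^5/t^4$ precisely in the troublesome regime $t\lesssim |A|^{3/2}/p^{1/2}$, while Theorem~\ref{thm:5} handles $t\gtrsim |A|^{3/2}/p^{1/2}$ as you describe. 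The hypotheses then enter where the paper uses them: $|A|\ll p^{2/3}$ together with $|X|\leq p$ guarantees $k^*=|A|/|X|^{1/3}\geq 2|A|^2/p$, and $|X|\leq|A|^3$ guarantees $k^*\geq 1$, so Lemma~\ref{lem:2} applies to every dyadic level above your threshold. With that input your dyadic computation, which mirrors the paper's choice $\Delta=|A|/|X|^{1/3}$, goes through verbatim.
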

Since
\[
\frac{|A|^3}K |B_K| < \sum_{\xi\in B_K}Q_\xi\leq |A|^4
\]
and $K \leq |A|$, we have
\[
|B_K|\leq |A|^2.
\]
Thus we may apply Lemma~\ref{lem:3} with $X=B_K$.

By Lemma~\ref{lem:3} and \eqref{eq:18},
\begin{equation}
  \label{eq:22}
  I \ll |A|^6|B_K|^{2/3}.
\end{equation}
Now we use Lemma~\ref{lem:3} again to bound $|B_K|$:
\[
\frac{|A|^3}K |B_K| \leq \sum_{\xi\in B_K}E^+(A,\xi A) \ll |A|^{3}|B_K|^{2/3},
\]
hence $|B_K|\ll K^3$.

Combining the bounds for $I$ and $II$ with the bound $|B_K|\ll K^3$, we have
\[
  \sum_{\xi\not=0}Q_\xi E_\xi \ll K^2|A|^6 + \frac{p|A|^5}K.
\]
To balance the terms on the right-hand side of the previous equation, we set $K=(p/|A|)^{1/3}$:
\begin{equation}
  \label{eq:19}
  \sum_{\xi\not=0}Q_\xi E_\xi \ll p^{2/3}|A|^{16/3}.
\end{equation}
This completes the proof of Lemma~\ref{lem:4}, pending the proof of Lemma~\ref{lem:3}.
\end{proof}

\subsection*{Proof of Lemma~\ref{lem:3}}
Recall that Lemma~\ref{lem:3} states that if $|A|\ll p^{2/3}$, then for any set $X\subseteq\F_p$ such that $|X|\leq |A|^3$, we have 
\[
\sum_{\xi\in X}E^+(A,\xi A) \ll |A|^3|X|^{2/3}.
\]
This is an explicit version of Bourgain's Theorem C from \cite{bourgain2009multilinear}.
Similar results were proved over $\R$ in \cite{murphy2015variations} by the \ST{} incidence bound.
We use the same approach as \cite{murphy2015variations}, but we use the following lemma in place of the \ST{} theorem.
\begin{lem}
  \label{lem:2}
Let $A$ be a subset of $\F_p$ and let $L_t$ denote the set of lines in $\F_P^2$ that contain at least $t$ points of $P=A\times A$.
If $t > \min(2|A|^2/p,1)$, then
\[
|L_t| \ll \frac{|A|^5}{t^4}.
\]
\end{lem}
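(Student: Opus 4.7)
The plan is to combine the trivial lower bound $t|L_t|\leq I(P,L_t)$ with two point--line incidence bounds, splitting the argument according to the size of $t$ relative to the threshold $|A|^{3/2}/p^{1/2}$.

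For ``large'' $t$, namely $t>c\,|A|^{3/2}/p^{1/2}$ for a suitable absolute constant $c$, I would apply Theorem~\ref{thm:5} with $B=A$ and $L=L_t$ to obtain
\[
t|L_t|\;\ll\;\frac{|A|^{3/2}|L_t|}{p^{1/2}}+|A|^{5/4}|L_t|^{3/4}+|A|^{4/3}|L_t|^{2/3}+|L_t|.
\]
By the choice of $c$, the first term on the right is absorbed into the left, and the last term is absorbed by $t\geq 2$. Solving the remaining inequality yields either $|L_t|\ll|A|^5/t^4$ (from the second term) or $|L_t|\ll|A|^4/t^3$ (from the third); since any line meets $A\times A$ in at most $|A|$ points, $t\leq |A|$, and the latter is dominated by the former.

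For ``small'' $t$ in the complementary range $2|A|^2/p<t\leq c\,|A|^{3/2}/p^{1/2}$, Theorem~\ref{thm:5} is wasteful because its first term already exceeds $t|L_t|$. Here I would apply the Fourier/spectral point--line incidence inequality
\[
I(P,L)\;\leq\;\frac{|P||L|}{p}+p^{1/2}|P|^{1/2}|L|^{1/2},
\]
which is the point--line analogue of the bound cited in the proof of Corollary~\ref{cor:1}. With $|P|=|A|^2$, the hypothesis $t>2|A|^2/p$ absorbs the first term on the right, leaving $|L_t|\ll p|A|^2/t^2$; and since $t^2\leq c^2|A|^3/p$ in this sub-regime, this is in turn $\ll|A|^5/t^4$, matching the large-$t$ conclusion.

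The main obstacle is the delicate case split at the threshold $t\sim|A|^{3/2}/p^{1/2}$: Stevens--de Zeeuw is efficient only above this threshold while the Fourier bound is efficient only below it, so neither alone covers the full range $t>2|A|^2/p$. Patching the two regimes together and verifying that every auxiliary term is absorbed in the correct direction is the only way to get the single uniform bound claimed. The ``min'' appearing in the hypothesis is essentially bookkeeping: when $2|A|^2/p<1$ the assumption degenerates to $t\geq 2$, in which case the same two-regime analysis goes through without modification.
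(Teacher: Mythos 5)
Your proof is correct and follows essentially the same route as the paper: the same split at $t\sim |A|^{3/2}/p^{1/2}$, with Theorem~\ref{thm:5} applied to $t|L_t|\leq I(P,L_t)$ above the threshold and a bound of the shape $|L_t|\ll p|A|^2/t^2$ below it, the only difference being that you derive the latter from Vinh's point--line incidence bound while the paper reads it off the variance estimate \eqref{eq:34}, two interchangeable forms of the same $L^2$ input. Two small points: absorbing the $+|L_t|$ term needs $t$ larger than the implicit constant, so for bounded $t$ one should fall back (as the paper does) on the trivial $|L_t|\leq|A|^4$; and your reading of the hypothesis is inverted (when $2|A|^2/p<1$ the stated condition is $t>2|A|^2/p$, not $t\geq 2$), though in effect both your argument and the paper's require $t>2|A|^2/p$ together with $t$ bounded below by a constant, which is exactly what the application in Lemma~\ref{lem:3} verifies.
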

The proof of Lemma~\ref{lem:2} requires the following bound, which is implicit in the work of Bourgain, Katz, and Tao~\cite{bourgain2004sum-product} and appears explicitly in \cite{murphy2016point-line}:
\begin{equation}
  \label{eq:34}
\sum_{\text{all lines }\ell} \left( i(\ell)-\frac{|A|^2}p \right)^2 \leq p|A|^2,
\end{equation}
where $i(\ell)=|(A\times A)\cap\ell|$.

\begin{proof}[Proof of Lemma~\ref{lem:2}]
For a line $\ell$ in $\F_p^2$, let $i(\ell)=|P\cap \ell|$, where $P=A\times A$.
Thus if $\ell\in L_t$, then $i(\ell)\geq t$.

Since $t>2|A|^2/p$, we have
\[
i(\ell)-\frac{|A|^2}p \geq \frac t2
\]
for all $\ell$ in $L_t$.
Thus
\[
\frac{|L_t|t^2}4 \leq \sum_{\ell\in L_t} \left( i(\ell)-\frac{|A|^2}p \right)^2.
\]
On the other hand, by equation~\eqref{eq:34} the right-hand side of the previous equation is at most $p|A|^2$, so
\[
|L_t| \ll \frac{p|A|^2}{t^2}.
\]

Now we consider two cases.
If $t \leq c|A|^{3/2}/p^{1/2}$, we have
\[
|L_t| \leq \frac{c^2|A|^3}{pt^2}|L_t| \ll \frac{|A|^5}{t^4}.
\]
If $t \geq c|A|^{3/2}/p^{1/2}$, then we will apply Theorem~\ref{thm:5}.
Since
\[
t|L_t| \leq I(P,L_t),
\]
by Theorem~\ref{thm:5}, we have
\[
t|L_t| \ll \frac{|A|^{3/2}|L_t|}{p^{1/2}} + |A|^{5/4}|L_t|^{3/4} + |A|^2.
\]
Since $t \geq c|A|^{3/2}/p^{1/2}$, if $c$ is sufficiently large (depending on the implicit constants in Theorem~\ref{thm:5}), we have
\[
t|L_t| \ll  |A|^{5/4}|L_t|^{3/4} + |A|^2,
\]
hence
\[
|L_t| \ll \frac{|A|^5}{t^4} + \frac{|A|^2}t \ll \frac{|A|^5}{t^4}.
\]
(The last inequality follows because $t\leq |A|$.)

Finally, note that if $1< t\ll 1$, then $|L_t|\ll |A|^5/t^4$ is trivial, since $|L_t|\leq |A|^4$.
\end{proof}

Now we proceed to the proof of the main result of this section.
\begin{proof}[Proof of Lemma~\ref{lem:3}]
To show that
\[
S:=\sum_{\xi\in X}E^+(A,\xi A)\ll |A|^3|X|^{2/3},
\]
we first write
\[
S = \sum_{\xi\in X}\sum_y r_{A+\xi A}^2(y).
\]

Let $Z_j$ denote the set of pairs $\{(\xi,y)\colon r_{A+\xi A}(y) >\Delta 2^j\}$.
Then
\begin{equation}
  \label{eq:26}
  S \ll \Delta |X||A|^2 + \sum_{j\geq 0}|Z_j|(\Delta 2^j)^2.
\end{equation}
On the other hand, for each pair $(\xi,y)$ in $Z_j$, we may associate the line $\ell_{\xi,y}=\{(a,b)\colon a+\xi b = y\}$.
Since the line $\ell_{\xi,y}$ contains at least $\Delta 2^j$ points of $A\times A$, by Lemma~\ref{lem:3} we have
\begin{equation}
  \label{eq:28}
  |Z_j| \leq |L_j| \ll \frac{|A|^5}{(\Delta 2^j)^4},
\end{equation}
whenever $\Delta 2^j \geq\min( 2|A|^2/p,1)$.
(We do not need strict inequality because it is included in the definition of $Z_j$.)

Assume for now that $\Delta \geq \min(2|A|^2/p,1)$; at the end of the argument, we will prove that our choice of $\Delta$ satisfies this condition whenever $|A|\ll p^{2/3}$.
By \eqref{eq:26} and \eqref{eq:28}, we have
\[
S \ll \Delta |X||A|^2 +\sum_{j\geq 0}(\Delta 2^j)^2\frac{|A|^5}{(\Delta 2^j)^4},
\]
Thus
\[
S \ll \Delta |X||A|^2 + \frac{|A|^5}{\Delta^2}.
\]
Choosing $\Delta = |A|/|X|^{1/3}$ yields
\[
S \ll |A|^3|X|^{2/3},
\]
as desired.

Now we will check that $\Delta = |A|/|X|^{1/3}$ is at least $2|A|^2/p$ whenever $|A|\ll p^{2/3}$:
\[
\Delta = \frac{|A|}{|X|^{1/3}} \geq \frac{2|A|^2}p \iff |X| \ll \frac{p^3}{|A|^3}.
\]
On the other hand, if $|A|\ll p^{2/3}$, then $p^3/|A|^3\gg p \geq |X|$.
Finally, $|X|\leq |A|^3$ implies $\Delta \geq 1$.
\end{proof}


\end{document}